\newtheorem{prop}{Proposition}
\newtheorem{thm}{Theorem}
\newtheorem{lem}{Lemma}
\newtheorem{cor}{Corollary}
\newtheorem{dfn}{Definition}
\DeclareMathOperator*{\lexmin}{lex-min}
\DeclareMathOperator*{\arglexmin}{arglex-min}
\begin{document}

\RUNAUTHOR{Basciftci and Van Hentenryck}

\RUNTITLE{Capturing Travel Mode Adoption in ODMTS}


\TITLE{Capturing Travel Mode Adoption \\ in Designing On-demand Multimodal Transit Systems}

\ARTICLEAUTHORS{
  \AUTHOR{Beste Basciftci} \AFF{Department of Business Analytics, University of Iowa, Iowa City, Iowa, 52242, USA, \EMAIL{beste-basciftci@uiowa.edu}}
  \AUTHOR{Pascal Van Hentenryck} \AFF{Georgia Institute of Technology, Atlanta, Georgia 30332, USA, \EMAIL{pvh@isye.gatech.edu}}
} 

\ABSTRACT{

This paper studies how to integrate rider mode preferences into the
design of On-Demand Multimodal Transit Systems (ODMTS). It is
motivated by a common worry in transit agencies that the ODMTS may be
poorly designed if the latent demand, i.e., new riders adopting the
system, is not captured. The paper proposes a bilevel optimization
model to address this challenge, in which the leader problem
determines the ODMTS design, and the follower problems identify the
most cost efficient and convenient route for riders under the chosen
design. The leader model contains a choice model for every potential
rider that determines whether the rider adopts the ODMTS given her
proposed route. To solve the bilevel optimization model, the paper
proposes an exact decomposition method that includes Benders optimal
cuts and nogood cuts to ensure the consistency of the rider choices in
the leader and follower problems. Moreover, to improve computational
efficiency, the paper proposes upper \textcolor{black}{and lower}
bounds on trip durations for the follower problems, valid inequalities
that strenghten the nogood cuts, \textcolor{black}{and approaches to
  reduce the problem size with problem-specific preprocessing
  techniques}.

The proposed method is validated using an extensive computational
study on a real data set from AAATA, the transit agency for the
broader Ann Arbor and Ypsilanti region in Michigan. The study
considers the impact of a number of factors, including the price of
on-demand shuttles, the number of hubs, and \textcolor{black}{access to transit systems} criteria. The designed ODMTS feature high adoption rates and
significantly shorter trip durations compared to the existing transit
system and highlight the benefits in \textcolor{black}{ensuring access} for low-income
riders. Finally, the computational study demonstrates the efficiency
of the decomposition method for the case study and the benefits of
computational enhancements that improve the baseline method by several
orders of magnitude.
}%


\KEYWORDS{On-Demand Multimodal Transit Systems, Travel Mode Adoption, Benders Decomposition, Branch and cut}

\maketitle

%

\section{Introduction}

This paper considers On-Demand Multimodal Transit Systems (ODMTS)
\citep{Arthur2019,ISE2019}, a new type of transit systems that combine
on-demand shuttles with fixed routes served by buses or rail. ODMTS
are organized around a number of hubs, on-demand shuttles serve local
demand and act as feeders to and from the hubs, and fixed routes
provide high-frequency service between hubs. By dispatching in real
time on-demand shuttles to pick up riders at their origins and drop
them off at their destinations, ODMTS are ``door-to-door'' and address
the first/last mile problem that plagues most of the transit systems.
Moreover, ODMTS address congestion and economy of scale by providing
high-frequency services along high-density corridors.
{\color{black}Figure \ref{sampleODMTSFigure} presents a sample ODMTS with buses between hubs along with the on-demand shuttles that can serve these hubs.}
They have been
shown to bring substantial convenience and cost benefits in simulation
and pilot studies in the city of Canberra, Australia
\citep{Arthur2019}, the transit system of the University of Michigan
\citep{ISE2019}, the Ann-Arbor/Ypsilanti region in Michigan
\citep{Basciftci2020}, and the city of Atlanta
\citep{TransferGraphs2020}. ODMTS differ from micro-mobility in that
they are designed and operated holistically. The ODMTS design thus
becomes a variant of the hub-arc location problem
\citep{Campbell2005b,Campbell2005a}: It is an optimization model that
decides which bus/rail lines to open in order to maximize convenience
and minimize costs \citep{Arthur2019}. This optimization model uses, as
input, the current demand, i.e., the set of origin-destination pairs
in the existing transit system.

This paper is motivated by a significant worry of transit agencies:
{\em the need to capture latent demand in the design of ODMTS}. This
concern, which recognizes the complex interplay between transit
agencies and riders \citep{Cancela2015}, was also raised by
\cite{Campbell2019}: they articulated the potential of (1) leveraging
data analytics within the planning process and (2) proposing transit
systems that encourage riders to switch transportation modes. As a
consequence, rider preferences and the induced mode choices should be
significant factors in the design of transit systems
\citep{Laporte2007Book}. Yet, many transit agencies only consider
existing riders when redesigning their network. But, as convenience
improves, more riders may decide to switch modes and adopt the transit
system instead of traveling with their personal vehicles. By ignoring
this latent demand, the transit system may be designed suboptimally,
resulting in higher costs or poor quality of
service. \cite{Basciftci2020} illustrated these points by comparing
the designs of ODMTS that differ by whether they capture latent
demand. The results highlighted the significant cost increase when
latent demand is not considered as the design under-invested in fixed
routes and over-utilized on-demand shuttles. Note also that
\cite{Agatz2020Networks} highlighted the integration of stakeholder
behavior in optimization models as a fundamental theme to address
grand challenges in the next generation of transportation systems.

\begin{figure}[h]
\centering
\includegraphics[width=0.85\textwidth]{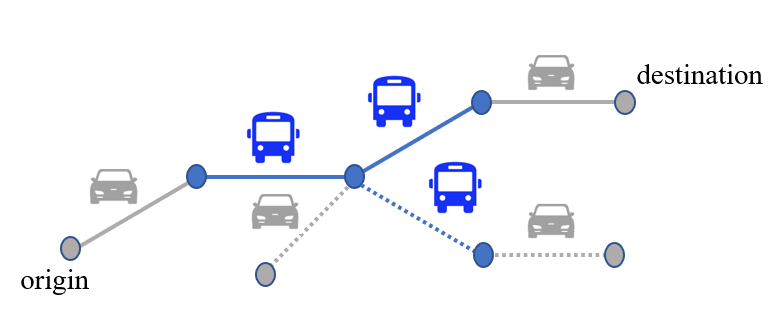}
\caption{\color{black} Illustration of the ODMTS with buses and on-demand shuttles. Hubs are denoted in blue color. Origin and destination stops are denoted in gray color. Solid lines represent the route of a rider from her origin to destination. Dashed lines represent potential bus legs between hubs or on-demand shuttles to/from the hubs.}
\label{sampleODMTSFigure}
\end{figure}

{\color{black} Before presenting the design framework, it is useful to
  review how an ODMTS is used in practice. When a user requests a
  ride, she is presented with the route from origin to destination
  that jointly optimizes system cost and user convenience. The user
  then decides whether she takes the ride or uses a different
  transportation mode.\footnote{Note that maximizing convenience only
    would always result in a direct shuttle trip, defeating the
    multi-modal nature of the ODMTS. Minimizing costs only would often
    result in the user rejecting the ride.} It is thus important to
  realize that users do not choose their routes in the ODMTS: they are
  presented with routes in their mobile applications and decide
  whether to take them. If they could choose the routes, they would
  select a direct shuttle trip.}

The key contribution of this paper is to propose {\em a general
  framework to design an ODMTS based on both existing and latent
  demands}. The framework assumes that the mode preference of a rider
is expressed through a choice model which, given a route in the ODMTS,
determines whether the rider adopts the ODMTS or continues to use her
personal vehicle. {\color{black} The network design problem is then
  formulated as a {\em bilevel optimization} model which can be
  informally understood as follows. There is a subproblem associated
  with each trip by a rider: given a network design (i.e., a choice of
  bus routes to open), this subproblem chooses the route from the trip
  origin to the trip destination that optimizes a weighted combination
  of system cost and user convenience. The master problem chooses a
  network design, obtains the routes of each pair (trip,rider), and
  determines whether the riders take the proposed rides based in their
  choice models. The master problem optimizes an objective function
  that consists of several components: (1) the fixed cost of opening
  bus routes; (2) the cost and convenience of the trips accepted by
  the riders; and (3) the revenue of all adopted trips.} The {\em
  bilevel optimization} model is solved using an exact decomposition
method: it uses traditional Benders optimality cuts and nogood cuts,
which are strengthened by valid inequalities exploiting the network
structure. The approach is validated on a real case study.

The contributions of the paper can be summarized as follows:

\begin{enumerate}
\item The paper presents a bilevel optimization approach to the design
  of ODMTS under rider adoption constraints. The bilevel optimization
  problem consists of (i) a leader problem that determines the transit
  network design and takes into account rider preferences as well as
  revenues and costs from adopting riders; (ii) follower problems
  identify the most cost-efficient and convenient route for riders.
  The personalized choice models are integrated into the leader
  problem to represent the interplay between the transit agency and
  rider preferences. Since the model assumes a fixed cost for riding
  the transit system, the choice models capture the desired
  convenience of the trips. 

\item The paper proposes an exact decomposition method for the bilevel
  optimization model. The method combines a Benders decomposition
  approach with combinatorial cuts that ensure the consistency between
  rider choices and the leader decisions. Furthermore, the paper
  presents valid inequalities that significantly strengthen these
  combinatorial cuts, as well as preprocessing steps that reduce the
  problem dimensionality. These enhancements produce orders of
  magnitude improvements in computation times.

\item The paper validates the approach using a comprehensive case
  study that considers the transit agency of the broad Ann
  Arbor/Ypsilanti region in Michigan. The case study demonstrates the
  benefits of the proposed approach from adoption, convenience, cost
  and \textcolor{black}{access to transit systems} perspectives. The results highlight that the ODMTS
  decreases trip durations by up to 53\% compared to the existing
  system, induces high adoption rates for the latent demand, and operates
  well inside the budget of the transit agency. 
\end{enumerate}

The rest of the paper is organized as follows. Section
\ref{sec:litReview} reviews the relevant literature. Section
\ref{sec:BilevelOptFormulation} presents the problem setting and the
resulting bilevel ODMTS design problem with latent demand and rider
choices. Section \ref{SectionAnalyticalResultsonTripDurations}
proposes theoretical results on trip durations in ODMTS.  Section
\ref{sec:solutionMethodologySection} presents an exact decomposition
algorithm and derives valid inequalities and problem-specific
enhancements. Section \ref{sec:ComputationalStudy} demonstrates the
performance of the proposed approach in the case study. Section
\ref{sec:Conclusion} concludes the paper with final remarks.

\section{Related Literature}
\label{sec:litReview}

The design of transit networks organized around hubs is an emerging
research area, with the goal of ensuring reliable service and
economies of scale \citep{Farahani2013}.
\citet{Campbell2005b,Campbell2005a} introduce a variant of this
problem, the hub-arc location problem, to select the set of arcs to
open between hubs while optimizing the flow with minimum
cost. \citet{Alumur2012} consider multimodal hub location and hub
network design problem by taking into account both cost and
convenience aspects in satisfying demand. \citet{Arthur2019} examine
this problem in the context of ODMTS, pioneering on-demand shuttles to
serve all or parts of the trips, and allowing routes that do not
necessarily involve arcs between hubs. The goal is to obtain a transit
network design that minimizes the cost and duration of the overall
trips. In these studies, user behaviour is not explicitly captured
within the transit network design process; instead the objective
function minimizes a weighted combination of the system cost and the
travel times of the trips for existing riders of the transit system.
\textcolor{black}{Recently, \citet{Steiner2020} studied the design of
  an integrated public bus system with on-demand services.  The paper
  points out the importance of optimizing over a mode-choice model for
  each origin-destination pair for determining rider preferences, and
  it mentions the resulting modeling and computational
  complexities. But the paper does not include mode-choice models:
  instead the formulation precomputes the induced demand based on the
  zones where on-demand service are provided.}

Capturing information about rider routes into transit network design
is a critical component of ensuring accessible public transit systems
\citep{Schobel2012}. \citet{Guan2006} model a joint line planning and
passenger assignment problem as a single-level mixed integer program,
where riders select their routes during network design and the route
durations are part of the objective function along with the costs of
the transit network.  \citet{Borndorfer2007} study this line planning
problem under these two competing objectives by utilizing a
column-generation approach as its solution methodology.
\citet{Schobel2006} consider identifying the routes that minimize the
overall travel time of the riders under a budget constraint on the
transit network design.

Another relevant line of research involving transit network design
problems focuses on maximizing population coverage by examining
population in the neighborhood of the potential stations
\citep{Changshan2005, Matisziw2006, Curtin2011}. In these settings,
travel costs can be jointly optimized with the maximization of
ridership capture \citep{Jarpa2013}. \citet{Marin2009} integrate user
behavior into this planning problem by representing the choices of the
riders according to the network design and the cost of the resulting
trip in comparison to their current mode of travel,
\textcolor{black}{and \citet{Marin2009_UrbanRapidTransit} provide an
  algorithm based on Benders decomposition for its solution}.
\citet{Laporte2011} extend this problem under the possibility of arc
failure; they aim at providing routes faster than other modes for a
high proportion of the trips under a budget
constraint. \textcolor{black}{\citet{Archilla2013_HeuristicMaxCovering}
  study a similar problem and propose a heuristic approach as its
  solution methodology. \citet{Bucarey2020} study this problem setting
  to enhance its formulation and further introduce a partial covering
  problem by enforcing a lower bound on the ridership amount while
  minimizing the network design cost.  In these problems, user choices
  can be associated with the costs or the durations of the trips to
  represent their mode switching behavior.}  Due to the complexity in
modeling and solving these problems with respect to the dual
perspectives of transit agency and riders, these studies focus on
single-level formulations.

\textcolor{black}{To represent the travel behavior of the riders in
  transit systems, \citet{Ye2007} present the important factors in
  adoption decisions such as trip duration and the number of transfers
  of the proposed routes, along with the income levels of the
  riders. Additionally, \citet{Correa2011} discuss the importance of
  cost in mode selection if the riders are subject to the price of the
  suggested route.  To capture the mode selection behavior of the
  riders in a given origin-destination pair, all-or-nothing policies
  can be adopted for the mode decisions of all riders in that trip or
  logit models can be used to separate these riders
  \citep{Laporte2005_Logit}.  \citet{Chowdhury2016} provide a
  comprehensive review on the rider perspectives in public transit.
  Recently, \citet{Yan2021} study the travel behavior of the
  low-income riders in on-demand public transit systems as opposed to
  fixed public transit systems, and observe higher adoption
  preferences due to the higher flexibility and \textcolor{black}{access} provided
  by on-demand services. These studies highlight the importance of
  trip duration in determining the adoption behavior of the riders,
  which can be further impacted by the characteristics of the rider
  and route of the corresponding trip. These factors along with the
  transfer times and the costs of the trips can be integrated into the
  trip duration to obtain a combined metric in determining
  personalized travel choice functions \citep{Basciftci2020}.}

As should be clear at this point, the design of public transit systems
involve decision-making processes from multiple entities, including
transit agencies and riders \citep{Laporte2011SocioEconomic}. Bilevel
optimization is thus a key methodology to formulate these multi-player
optimization problems and it has been applied to several urban transit
network design problems \citep{Leblanc1986, Farahani2013_Review}. This
setting involves a leader who determines a set of decisions, and the
followers determine their actions under these decisions.
\citet{Fontaine2014} study the discrete network design problem where
the leader designs the network to reduce congestion under a budget
constraint and the riders search for the shortest path from their
origin to destination. \citet{Jia2012} and \citet{Yu2015} consider
this setting over multimodal transit networks with buses and cars;
they determine which bus legs are open and with which frequencies, and
ensure traffic equilibrium.  Bilevel optimization is also studied in
toll optimization problems over multicommodity transportation networks
by maximizing the revenues obtained through tolls in the leader
problem and obtaining the paths with minimum costs in the follower
problem \citep{Labbe1998, Brotcorne2001}. These studies are then
extended to a more general problem setting when the underlying network
is jointly optimized while considering the pricing aspect
\citep{Brotcorne2008}. \citet{PINTO20202} also apply bilevel
optimization to the joint design of multimodal transit networks
and shared autonomous mobility fleets. Here, the upper-level problem is
a transit network frequency setting problem that allows for the
removal of bus routes.

\citet{Colson2007} provide an overview of bilevel optimization
approaches with solution methodologies and discuss traffic equilibrium
constraints that may complicate the \textcolor{black}{network design
  problems} further when congestion is considered. \citet{Colson2005,
  Sinha2018} further present possible solution methodologies to
address bilevel optimization problems. Due to the complex nature of
the bilevel problems involving transportation networks, various
studies (e.g., \citet{Bianco2009, Jia2012, Kalashnikov2016}) focus on
developing heuristics as its solution methodology. On the other hand,
\citet{Gao2005, Fontaine2014, Yu2015} provide reformulation and
decomposition-based solution methodologies to provide exact solutions
for this class of problems. Despite this extensive literature on
bilevel optimization in transportation problems, personalized rider
preferences regarding transit routes have not been incorporated into
the network design. \textcolor{black}{As rider choices are neglected
  within the planning process, the latent demand, i.e., potential
  riders who can adopt the transit system, is disregarded, potentially
  leading to suboptimal network designs with lower adoptions.}  To our
knowledge, \citet{Basciftci2020} provide the first study that focuses
on this bilevel optimization problem by associating rider choices with
the cost and time of those trips in the ODMTS
system. \textcolor{black}{The leader problem optimizes the network
  design of the ODMTS, and the follower problems identify the optimum
  route of each trip based on their weighted cost and
  convenience. Additionally, riders have a personalized choice model
  to determine their travel mode by observing the suggested route.}
The studied problem considers the specific case where the transit
agency and riders subsidize the cost of the trips equally, leading
rider choices to be based on a combination of these cost and
convenience. However, if pricing is not equally subsidized between
these entities or rider preferences solely depend on the time of the
trips, then the problem becomes much more challenging to solve. To
address these challenges, this paper extends this line of research and
models rider preferences that depends on trip convenience for a
transit system with fixed ticket prices. Since this setting
substantially complicates exact solution methods, this paper studies
an exact decomposition method that exploits Benders optimality cuts,
combinatorial cuts, and dedicated valid inequalities strengthening the
combinatorial
cuts. \textcolor{black}{Section~\ref{sec:DiscussiontoCPAIORPaper}
  provides an extensive comparison and discussion of the two proposed
  models and highlights the contributions of this paper in comparison
  to existing studies.} This paper also contains an extensive
computational study that includes rider adoption, cost, revenue, and
\textcolor{black}{access to transit systems} aspects on various instances.

\section{The Bilevel Optimization Approach}
\label{sec:BilevelOptFormulation} 

This section presents a bilevel optimization approach for the ODMTS
design based on a game theoretic framework between the transit
agencies and riders. The transit agency is the leader who determines
the transit network design of the system, whereas the riders are the
followers who decide whether to adopt the transit system as their
travel mode. The proposed framework aims at designing the ODMTS
network while taking into account both existing transit riders and the
latent demand, i.e., riders who observe the system design and
performance, and decide their travel mode
accordingly. Section~\ref{sec:ProblemSetting} describes the problem
setting, Section ~\ref{sec:OptModel} presents the optimization model, {\color{black}Section ~\ref{sec:DiscussiontoCPAIORPaper} provides a discussion on the proposed framework,} 
and Section~\ref{sec:PreprocessingSteps} presents preprocessing steps
for dimensionality reduction. This proposed problem stays as close as
possible to the original setting of the ODMTS design \citep{Arthur2019}.

\subsection{Problem Setting} 
\label{sec:ProblemSetting}

The input {\color{black}for the ODMTS design} is defined in terms of a set $N$ of nodes associated with
bus stops, a subset $H \subseteq N$ of which are designated as
hubs. Each trip $r \in T$ has an origin stop $or^r \in N$, a
destination stop $de^r \in N$, and a number of riders taking that trip
$p^r \in \mathbb{Z}_{+}$. The time and distance between each pair $i,
j \in N$ are denoted by $t_{ij}$ and $d_{ij}$ respectively. These
parameters can be asymmetric but are assumed to satisfy the triangular
inequality. 
\textcolor{black}{Costs and inconvenience (e.g.,
travel time) are the two main aspects that transit agencies consider during network design. As the agencies generally operate under limited budget, it becomes critical to minimize cost. On the other hand, designing transit systems with better convenience not only improves the service for existing riders but also 
provides a more appealing mode choice for potential riders who may now decide to adopt the system when the duration of their suggested routes improves. Furthermore, adoption of additional riders increases the revenue for the transit agency. To this end,} the optimization problem uses a parameter
$\theta \in [0,1]$ to balance both objectives using a convex
combination. In particular, inconvenience is associated with the travel time
and multiplied by $\theta$, while travel cost is associated with the
travel distance and multiplied by $1 - \theta$.

Riders pay a fixed cost $\phi$ to use the transit system, irrespective
of their routes. \textcolor{black}{This fixed cost per rider becomes a
  revenue to the transit agency, which is captured as \[
  \varphi = (1 -
  \theta) \phi,
  \] in the leader objective for additional riders. If a
  leg between the hubs $h, l \in H$ is open, then the transit agency
  incurs an investment cost $\rho \, n \, d_{hl}$, where $\rho$ is the cost
  of using a bus per mile and $n$ is the number of buses operating in
  each open leg within the planning horizon. This cost is captured as \[
  \beta_{hl} = (1 - \theta) \rho \, n \, d_{hl}
  \]
  in the objective.}
Moreover, the transit agency incurs a service cost for each trip $r
\in T$ that consists of the weighted cost and inconvenience of using
bus legs between hubs and on-demand shuttle legs between bus
stops. More specifically, the weighted cost and inconvenience for an
on-demand shuttle between $i$ and $j$ 
is given
by \[
\gamma_{ij}^r = (1 - \theta) g \, d_{ij} + \theta t_{ij},
\] where
$g$ is the cost of using a shuttle per mile. Since the operating cost
of buses are already considered within the investment costs, each bus
leg between the hubs $h, l \in H$ in trip $r \in T$ only incurs an
inconvenience cost \[
\tau^r_{hl} = \theta (t_{hl} + s), 
\] where $s$ is
the average waiting time of a bus.

To represent the latent demand for the transit system, the set of
trips is partitioned into two groups: riders from \textcolor{black}{the
  trip set} $T' \subseteq T$ currently travel with their personal
vehicles, and riders from \textcolor{black}{the trip set} $T \setminus
T'$ currently use the transit system.  \textcolor{black}{The modeling
  assumes that existing transit riders will remain loyal to the ODMTS,
  given that case studies have demonstrated that ODMTS improves rider
  convenience for the vast majority of the trips and these riders
  might not have an alternative mode of transportation.}  Riders from
$T'$ may switch their travel mode from their personal vehicles to the
ODMTS, depending on the inconvenience of the route assigned to them.
Consequently, each trip $r \in T'$ is associated with a binary choice
model ${\cal C}^r$ that determines, given a proposed route, whether
its \textcolor{black}{riders adopt} the ODMTS. More precisely, given
route vectors $\bold{x^r}, \bold{y^r}$ for trip $r$, 
\textcolor{black}{which are described in more detail in Section \ref{sec:OptModel} and represent the utilized hub legs and on-demand shuttles respectively, }${\cal
  C}^r(\bold{x^r}, \bold{y^r})$ holds if trip $r$ adopts the
ODMTS. Since the price of the ODMTS is fixed, this paper assumes that
the choice model only depends on the trip inconvenience which is
captured by the function
\begin{equation} \label{eq:convenienceVariable}
f^r(\bold{x^r}, \bold{y^r}) = \sum_{h,l \in H}  (t_{hl} + s) x_{hl}^r + \sum_{i,j \in N}  t_{ij} y_{ij}^r.
\end{equation}
\textcolor{black}{In this choice model, waiting times are considered at
  every transfer point at hub locations to {\color{black}account for} the impact of
  transfers within the suggested route.  On the other hand, waiting
  time for on-demand shuttles is considered negligible as ride-sharing
  operations can be optimized in real-time using efficient algorithms
  (e.g., \citet{RileyRideSharing2019}) to obtain low waiting times.}
Moreover, the paper assumes that a rider will adopt the ODMTS if her
trip inconvenience in the transit system is not more than $\alpha^r$
times of her direct trip duration $t^r_{cur}$ (using her personal
vehicle), where $\alpha^r$ is a parameter associated with the
rider. More formally, the paper adopts the following choice model
\begin{equation} 
\label{eq:ChoiceFunctionTime}
{\cal C}^r(\bold{x^r}, \bold{y^r}) \equiv \mathbbm{1}(f^r(\bold{x^r}, \bold{y^r}) \leq \alpha^r \ t^r_{cur}).
\end{equation}

{\color{black}Before introducing the optimization model, it is useful
  to recall how ODMTS is designed and operated: (1) the transit agency
  designs the ODMTS to optimize a weighted combination of system cost
  and rider convenience; (2) when a rider requests an ODMTS trip during
  operation, she is presented by the ODMTS runtime system with the
  route that again optimizes a weighted combination of system cost
  and rider convenience; and (3) the rider then decides whether to
  adopt the proposed route based on her choice model or to drive with
  her own vehicle. The choice model of a rider is purely based on
  convenience, since the price of the ODMTS ride is fixed.
Section~\ref{sec:DiscussiontoCPAIORPaper} discusses this framework
further and, in particular, highlights the need for a bilevel
optimization.  Indeed, while a single-level optimization can be
formulated, it would enable the transit agency to propose arbitrarily
bad rides to users in order to avoid serving them.}

{\color{black}
\begin{table}[h]
\centering
\begin{tabular}{|rl}
\hline
\multicolumn{2}{|l|}{${\color{black}\textbf{Sets}}$}   \\ \hline
{\color{black}$N$}                        & \multicolumn{1}{l|}{{\color{black}Set of bus stops.}}                                              \\
{\color{black}$H$}                      & \multicolumn{1}{l|}{{\color{black}Set of potential hubs.}}                                               \\
{\color{black}$T$}                      & \multicolumn{1}{l|}{{\color{black}Set of all trips (existing trips and latent demand).}}                                               \\
{\color{black}$T'$}                     & \multicolumn{1}{l|}{{\color{black}Set of trips with choice (latent demand).}}                                               \\
\hline
\multicolumn{2}{|l|}{${\color{black}\textbf{Parameters}}$}   \\ \hline
{\color{black}$\theta$}                        & \multicolumn{1}{l|}{{\color{black}Weight factor for cost and inconvenience.}}                                              \\
{\color{black}$\beta_{hl}$}                      & \multicolumn{1}{l|}{\color{black}Weighted setup cost of opening the leg between hubs $h,l$.}  
 \\
{\color{black}$\tau_{hl}^r$}                      & \multicolumn{1}{l|}{\color{black}Weighted cost and inconvenience of the leg between hubs $h,l$ for trip $r$.} \\
{\color{black}$\gamma_{ij}^r$}                      & \multicolumn{1}{l|}{\color{black}Weighted cost and inconvenience of the on-demand shuttle between stops $i,j$ for trip $r$.} \\
{\color{black}$\varphi$}                      & \multicolumn{1}{l|}{\color{black}Weighted ticket price.} \\
{\color{black}$\phi$}                      & \multicolumn{1}{l|}{\color{black}Ticket price.} \\
{\color{black}$t_{ij}$}                      & \multicolumn{1}{l|}{\color{black}Travel time between stops $i,j$.} \\
{\color{black}$d_{ij}$}                      & \multicolumn{1}{l|}{\color{black}Travel distance between stops $i,j$.} \\
{\color{black}$s$}                      & \multicolumn{1}{l|}{\color{black}Average waiting time at hubs.} \\
\hline
\multicolumn{2}{|l|}{${\color{black}\textbf{Decision Variables}}$}     \\ \hline
{\color{black}$z_{hl}$}           & \multicolumn{1}{l|}{\color{black}$1$ if the leg from hubs $h$ to $l$ is open, and $0$ otherwise.}                        \\
{\color{black}$x_{hl}^r$}          & \multicolumn{1}{l|}{\color{black}$1$ if route of trip $r$ utilizes the leg from hubs $h$ to $l$, and $0$ otherwise.}                     \\ 
{\color{black}$y_{ij}^r$}          & \multicolumn{1}{l|}{\color{black}$1$ if route of trip $r$ utilizes an  on-demand shuttle from stops $i$ to $j$, and $0$ otherwise.}                     \\ 
{\color{black}$\delta^r$}          & \multicolumn{1}{l|}{\color{black}$1$ if riders of trip $r$ adopts the ODTMS, and $0$ otherwise.}                     \\
{\color{black}$b^r$}          & \multicolumn{1}{l|}{\color{black}Weighted cost and inconvenience of trip $r$.}                     \\
{\color{black}$f^r$}          & \multicolumn{1}{l|}{\color{black}Inconvenience of trip $r$.}                     \\
\hline
\end{tabular}
\caption{Problem parameters and decision variables.}
\label{tab:problemParameters}
\end{table}
}

\subsection{The Bilevel Optimization Model} 
\label{sec:OptModel}

The decision variables of the optimization model are as follows:
Binary variable $z_{hl}$ is 1 if the bus leg between the hubs $h,l \in
H$ is open. Additionally, for each trip $r \in T$, binary variables
$x_{hl}^r$ and $y_{ij}^r$ represent whether the route selected for
trip $r$ utilizes the bus leg between the hubs $h,l \in H$, and the
shuttle leg between the stops $i, j \in N$, respectively.
\textcolor{black}{Given a network design, variable $b^r$ corresponds to
  the weighted cost and inconvenience (i.e., trip duration) of trip $r
  \in T$ by considering the hub leg and on-demand shuttle components
  used in serving that trip. Similarly, variable $f^r$ is introduced
  in \eqref{eq:convenienceVariable} and represents solely the
  inconvenience of trip $r \in T$.}  The optimization model also uses
a binary decision variable $\delta^r$ for each trip $r \in T'$ to
represent whether its rider switches her travel mode to the ODMTS.
\textcolor{black}{Note that all riders of a trip $r \in T'$ are assumed
  to have the same adoption behavior with the same $\alpha^r$ value.}
\textcolor{black}{Table~\ref{tab:problemParameters} provides a summary of the \textcolor{black}{main} sets, parameters and decision variables used in the optimization model.}

\begin{figure}[!t]
\begin{subequations} \label{eq:upperLevelProblemUpdated2}
\begin{alignat}{1}
\min_{z_{hl}, b^r, \delta^r} \quad & \sum_{h,l \in H} \beta_{hl} z_{hl} + \sum_{r \in T \setminus T'} p^r b^r + \sum_{r \in T'} p^r \delta^r (b^r - \varphi) \label{eq:upperLevelObj} \\
\text{s.t.} \quad & \sum_{l \in H} z_{hl} = \sum_{l \in H} z_{lh} \quad \forall h \in H \label{eq:upperLevelConstr1} \\
& \delta^r = {\cal C}^r(\bold{x^r}, \bold{y^r}) \quad \forall r \in T' \label{eq:userChoiceModel} \\
& z_{hl} \in \{0,1\} \quad \forall h,l \in H  \label{eq:binaryConstraint} \\
& \delta^r \in \{0,1\} \quad \forall r \in T' \label{eq:continuousConstraint} 
\end{alignat}
\end{subequations}
where $(\bold{x^r}, \bold{y^r}, b^r)$ are a solution to the optimization problem
\begin{subequations}
\label{eq:lowerLevelProblem}
\begin{alignat}{1}
\lexmin_{x_{hl}^r, y_{ij}^r, b^r, f^r} \quad &  \langle b^r, f^r \rangle \label{eq:lowerLevelObj} \\
\text{s.t.} \quad
   & b^r = \sum_{h,l \in H} \tau_{hl}^r x_{hl}^r + \sum_{i,j \in N} \gamma_{ij}^r y_{ij}^r \label{eq:drDefinition} \\
   & f^r = \sum_{h,l \in H}  (t_{hl} + s) x_{hl}^r + \sum_{i,j \in N}  t_{ij} y_{ij}^r \label{eq:frDefinition} \\
   & \sum_{\substack{h \in H \\ \text{if } i \in H}} (x_{ih}^r - x_{hi}^r) + \sum_{j \in N}  (y_{ij}^r - y_{ji}^r) = \begin{cases}
     1 &, \text{if  } i = or^r \\
    -1 &, \text{if  } i = de^r \\
    0 &, \text{otherwise}
    \end{cases} \quad \forall i \in N \label{eq:minFlowConstraint} \\
  & x_{hl}^r \leq z_{hl} \quad \forall h,l \in H \label{eq:openFacilityOnlyAvailable} \\
  & x_{hl}^r \in \{0,1\} \quad \forall h,l \in H, y_{ij}^r \in \{0,1\} \quad \forall i,j \in N. \label{eq:integralityFlowConstr}
\end{alignat}
\end{subequations}
\caption{The Bilevel Optimization Model for ODMTS Design with Travel Mode Adoption.}
\label{fig:bilevel}
\end{figure}

The optimization model is given in Figure \ref{fig:bilevel}: it
consists of a leader model and a follower problem for each trip
$r$. The leader problem (Equations \eqref{eq:upperLevelObj}--
\eqref{eq:continuousConstraint}) determines the network design between
the hubs for the ODMTS whereas, \textcolor{black}{given this design,}
the follower problem (Equations
\eqref{eq:lowerLevelObj}--\eqref{eq:integralityFlowConstr}) identifies
routes for each trip $r \in T$ by utilizing the legs in this network
along with the on-demand shuttles \textcolor{black}{that can serve the
  first and last miles of the trip or provide a direct ride from its
  origin to destination}.

\textcolor{black}{The leader objective \eqref{eq:upperLevelObj} minimizes the sum of (i)
the investment cost of opening bus legs, (ii) the weighted cost and
inconvenience of the trips of the existing riders, and (iii) the weighted cost and
inconvenience minus 
revenues of those riders adopting the ODMTS. 
As existing transit riders are assumed to adopt the ODMTS, their constant revenue component is omitted in the objective.}
Constraint
\eqref{eq:upperLevelConstr1} guarantees weak connectivity between the
hubs by ensuring the sum of incoming and outgoing open legs to be
equal to each other for each hub.  \textcolor{black}{Although this
  formulation does not eliminate the potential of disconnected
  components in the network, the case studies under various demand
  patterns and parameter settings in Section \ref{sec:ODMTSDesign}
  always result in connected designs.}  Constraint
\eqref{eq:userChoiceModel} captures the mode choice of the riders in
$T'$ based on the ODMTS routes.

For a given trip $r$, the follower problem
\eqref{eq:lowerLevelProblem} minimizes the lexicographic objective
function $\langle b^r, f^r \rangle$, where $b^r$ represents the cost
and inconvenience of trip $r$ and $f^r$ breaks potential ties by
returning a most convenient route for the rider of trip $r$. Observe
that this \textcolor{black}{latter objective} is aligned with the
travel choice model. Constraint \eqref{eq:minFlowConstraint} enforces
flow conservation for the bus and shuttle legs used in trip
$r$. Constraint \eqref{eq:openFacilityOnlyAvailable} ensures that the
route only considers open bus legs.
Note that sub-objective $b^r$ contains sub-objective $f^r$ multiplied by
$\theta$, and the lexicographic objective breaks ties by choosing the
optimal value of $b^r$ with the smallest value of $f^r$.

\begin{prop} \label{lexicographicMinimumProp}
For any $\bold{z} \in \{0,1\}^{|H| \times |H|}$, a lexicographic
minimizer of problem \eqref{eq:lowerLevelProblem} exists and the
lexicographic minimum is unique.
\end{prop}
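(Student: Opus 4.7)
The plan is to prove both existence and uniqueness by first exhibiting a canonical feasible route that works for every $\bold{z}$, and then appealing to the finiteness of the feasible region together with the nested-minimization definition of lex-min.

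First I would establish non-emptiness of the feasible set of \eqref{eq:lowerLevelProblem} for any fixed $\bold{z} \in \{0,1\}^{|H|\times|H|}$. The witness I would use is the direct on-demand shuttle: set $y_{or^r,de^r}^r = 1$ and every other $x_{hl}^r$ and $y_{ij}^r$ to zero. Flow conservation \eqref{eq:minFlowConstraint} then reduces to a single unit of flow leaving $or^r$ and entering $de^r$; the coupling constraint \eqref{eq:openFacilityOnlyAvailable} holds trivially since no $x$ variable is activated, regardless of whether $\bold{z}$ is even the zero matrix; the integrality constraints are met; and $d^r, f^r$ are then pinned down by \eqref{eq:drDefinition}--\eqref{eq:frDefinition}. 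This is the only step whose validity could depend on the leader's decisions, so it is where I would focus the care in the proof.

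Once feasibility is in hand, I would observe that every decision variable in \eqref{eq:lowerLevelProblem} is binary and there are finitely many of them, so the feasible set is finite. The nested minima $d^{r\ast} := \min\{d^r : (\bold{x^r},\bold{y^r}) \text{ feasible}\}$ and $f^{r\ast} := \min\{f^r : (\bold{x^r},\bold{y^r}) \text{ feasible},\; d^r = d^{r\ast}\}$ are therefore each attained on a non-empty finite subset of $\mathbb{R}$; any solution attaining both is a lexicographic minimizer, which proves existence. Uniqueness of the lex-min value $(d^{r\ast}, f^{r\ast})$ is then immediate because $d^{r\ast}$ and $f^{r\ast}$ are each the minimum of a finite non-empty set of real numbers and are therefore unique. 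The only conceptual obstacle is the feasibility verification for a worst-case $\bold{z}$ (in particular $\bold{z} = \mathbf{0}$); once the direct-shuttle witness is fixed, the remainder is a short finiteness argument that requires no further machinery.
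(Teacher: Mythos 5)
Your proposal is correct and follows essentially the same route as the paper: the direct shuttle leg $y^r_{or^r de^r}=1$ witnesses feasibility for any $\bold{z}$, and attainment plus uniqueness of the lexicographic minimum then follow from the structure of the nested minimizations (the paper phrases this as each objective component being bounded below, while you use finiteness of the binary feasible set, which is an equivalent and if anything slightly tighter justification).
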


\noindent
This proposition follows because the feasible space of a follower
subproblem is not empty, since there is always a direct shuttle route from $or^r$ to $de^r$. Moreover, each component of the
objective is bounded from below.

Observe that, once a design $\bold{z}$ is chosen, the mode choice of
every rider is uniquely determined, which is important for
computational reasons. Moreover, the follower problem has a totally
unimodular constraint matrix, and can be solved as a linear program
using an objective of the form $M \ b^r + f^r$ for a suitably large
$M$. In the rest of the paper, a solution $\bold{z} \in \{0,1\}^{|H|
  \times |H|}$ is called an ODMTS design. Moreover, given two ODMTS
designs $\bold{z}^1$ and $\bold{z}^2$, $\bold{z}^1 \leq \bold{z}^2$
iff $z^1_{hl} \leq z^2_{hl}$ for all $h, l \in
H$. \textcolor{black}{This means that every bus leg that is open in
  $\bold{z}^1$ is also open in $\bold{z}^2$ with potentially more bus
  legs open in the latter design.} 

{\color{black}
\subsection{Discussion on the Proposed Model} 
\label{sec:DiscussiontoCPAIORPaper}

The model in Figure \ref{fig:bilevel} considers an optimization of an
on-demand multimodal network over a choice function for riders that
considers only convenience. This captures the reality of transit
systems as most of these systems are currently organized with fixed
pricing strategies and, as a result, preferences of the potential
riders can be based on the convenience of the suggested routes. Under
this setting, from the transit agency's perspective, cost and
convenience may be antagonistic to each other. Specifically, if only
convenience matters, then shuttles would be used for serving the
trips, increasing the cost of the ODMTS. On the other hand, the
network designer may decrease the cost for the network design by
opening new bus lines and benefit from economies of scale. These bus
lines may improve the convenience of some riders already using the bus
network.  But it may also worsen the convenience of some other
riders, who may not have direct shuttle trips anymore or may now have
shorter first/last shuttle legs. Those riders may thus decide not to
adopt the transit system because of the worse convenience. In effect,
the realistic setting adopted in the paper creates a non-monotonic
behavior in the design process, as opening or closing bus legs may
increase or decrease convenience of the riders. In turn, this behavior
further necessitates the bilevel structure of the optimization
model. Indeed, a single-level model would let the optimization choose
which route to propose to each rider and could therefore choose routes
that are so long that the rider will not adopt the system. The
optimization would then select which riders and neighborhoods it would
serve, and rejects those who are ``profitable'', defeating the purpose
of public transit and the need to serve underrepresented,
low-income communities. 
{\color{black}Section~\ref{sec:ComparisontoSingleLevel} in the Appendix provides 
the formulation for the single-level problem and illustrates this unfair behavior 
of the transit agency over a sample instance. These results demonstrate that
the model suggests longer routes for a subset of potential riders, so that they
do not adopt the transit system, because they are not profitable. This results
in significantly lower adoption ratios. This unfair behavior goes against the 
mission of transit agencies that generally aim at providing an equitable and unbiased
access to their system. This is precisely what the bi-level model achieves.}

This paper thus proposes a fundamentally different setting compared to
\citet{Basciftci2020}. Indeed, as discussed in the literature review,
\cite{Basciftci2020} study an optimization model where the objective
of the transit agency and the choice models of the riders are aligned
and consist of a convex combination of cost and
convenience. Specifically, in their study, mode choices depend on the
variable $b^r$, as opposed to the convenience $f^r$, with the choice
function $\mathbbm{1}(b^r(\bold{x^r}, \bold{y^r}) \leq \alpha^r
\ b^r_{cur})$, where $b^r_{cur}$ represents the weighted cost and
convenience of the rider's current trip using her personal vehicle.
Furthermore, the costs of on-demand shuttles are equally subsidized
between the transit agency and riders: the weighted cost and
convenience for an on-demand shuttle between $i$ and $j$ for both the
transit agency and riders is given by $(1 - \theta) \ \frac{g}{2}
\ d_{ij} + \theta t_{ij}$, which half the cost of the on-demand
shuttle component $g$. On the other hand, in this paper, the objective
for the transit agency, i.e., $\gamma^r_{ij}$ for trip $r$, is given
by $(1 - \theta) \ g \ d_{ij} + \theta t_{ij}$ and the riders pay a
fixed price for any trip. As a result, the choice models focus
exclusively on convenience but may differ obviously for different
classes of riders. This paper also models the additional revenues
coming from transit adoption in its objective function. 

The model has fundamental mathematical and computational consequences.
The alignment of the choice functions and the objective function in
\citet{Basciftci2020} ensures a desirable monotonicity property: as
more bus lines are open, the $b^r$ values improve.  This monotonic
relationship between the network design $\bold{z}$ and the $b^r$
values simplifies the combinatorial cuts that are added as a part of
the solution procedure to ensure the consistency between rider choices
and network design decisions as rider choices remain consistent with
changes in the designs. On the other hand, in this paper, adding bus
lines may improve or decrease convenience $f^r$, creating a
non-monotonic behavior that complicates the cut generation procedure.
As discussed later in the paper, the combinatorial cuts now need to be
lifted without these desirable monotonicity property. Section
\ref{sec:FinalComparisonCPAIOR} in the Appendix further discusses the
comparison of the two studies by highlighting the novel technical
results, the differences in the cut generation procedures, and the
case studies.}

\subsection{Preprocessing Steps}
\label{sec:PreprocessingSteps}

This section presents a number of preprocessing steps to simplify the
bilevel optimization problem.

\subsubsection{Linearization of the Leader Problem}
\label{sec:UBonFollowerProblemObj}

The objective function of the leader problem \eqref{eq:upperLevelObj}
includes bilinear terms $\delta^r b^r$ for all trips $r \in T'$. These
terms can be linearized with an exact McCormick reformulation since
$\delta^r$ is a binary variable. In particular, a bilinear term
$\delta^r b^r$ ($r \in T'$) in the objective function is replaced with
a new variable $\nu^r$, and the following constraints are added to the
leader problem:
\begin{subequations} 
\label{eq:objLinearization}
\begin{alignat}{1}
\nu^r & \leq M^r \delta^r \\
\nu^r & \leq b^r \\
\nu^r & \geq b^r - M^r (1 - \delta^r) \\
\nu^r & \geq 0,
\end{alignat}
\end{subequations}
where the term $M^r$ is an upper bound on the value of $b^r$. The 
following result is helpful in finding such a bound. 

\begin{prop} \label{monotoneTheorem}
  Let $r \in T$ and $({b^r_1}^*, {f^r_1}^*)$ and $({b^r_2}^*,
  {f^r_2}^*)$ be the optimal objective values of the follower problem
  under the ODMTS designs $\bold{z}^1$ and $\bold{z}^2$. If
  $\bold{z}^1 \leq \bold{z}^2$, then ${b^r_1}^* \geq {b^r_2}^*$.
\end{prop}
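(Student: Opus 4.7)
The plan is to establish monotonicity via a feasible-set inclusion argument. The key observation is that the design vector $\mathbf{z}$ enters the follower problem \eqref{eq:lowerLevelProblem} only through the coupling constraint \eqref{eq:openFacilityOnlyAvailable}, namely $x_{hl}^r \leq z_{hl}$; the flow conservation constraint \eqref{eq:minFlowConstraint}, the definitional constraints \eqref{eq:drDefinition}--\eqref{eq:frDefinition}, and the integrality constraint \eqref{eq:integralityFlowConstr} are all independent of $\mathbf{z}$. So the strategy is to show that enlarging $\mathbf{z}$ weakly enlarges the follower's feasible region, and then to deduce the corresponding inequality on the first lexicographic objective.

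First, I would let $\mathcal{F}(\mathbf{z})$ denote the feasible set of \eqref{eq:lowerLevelProblem} for a given design $\mathbf{z}$ and verify that $\mathbf{z}^1 \leq \mathbf{z}^2$ implies $\mathcal{F}(\mathbf{z}^1) \subseteq \mathcal{F}(\mathbf{z}^2)$. Indeed, if $(\mathbf{x}^r,\mathbf{y}^r,d^r,f^r) \in \mathcal{F}(\mathbf{z}^1)$, then $x_{hl}^r \leq z^1_{hl} \leq z^2_{hl}$ for every $h,l \in H$, and every other constraint holds verbatim. By Proposition~\ref{lexicographicMinimumProp} (applied to both $\mathbf{z}^1$ and $\mathbf{z}^2$, which is legitimate because the feasibility of the direct shuttle route does not depend on $\mathbf{z}$), both follower problems admit unique lexicographic minimizers.

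Next, I would use the defining property of lexicographic minimization to reduce the claim to a standard monotonicity statement on scalar minima. By definition, ${d^r_i}^* = \min\{d^r : (\mathbf{x}^r,\mathbf{y}^r,d^r,f^r) \in \mathcal{F}(\mathbf{z}^i)\}$ for $i = 1,2$, since the first component of a lex-min always equals the plain minimum of the first objective over the feasible set. From $\mathcal{F}(\mathbf{z}^1) \subseteq \mathcal{F}(\mathbf{z}^2)$ it then follows directly that
\[
{d^r_2}^* = \min_{\mathcal{F}(\mathbf{z}^2)} d^r \;\leq\; \min_{\mathcal{F}(\mathbf{z}^1)} d^r = {d^r_1}^*,
\]
which is the desired inequality. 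The proof is thus essentially a one-line monotonicity observation; the main (and only mild) obstacle is to articulate cleanly that the lexicographic objective does not interfere with this monotonicity, which is handled by isolating the first coordinate as above.
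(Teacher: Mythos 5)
Your proof is correct and follows essentially the same route as the paper's: both establish that $\bold{z}^1 \leq \bold{z}^2$ implies the follower's feasible region under $\bold{z}^1$ is contained in that under $\bold{z}^2$, and then conclude by monotonicity of the minimum. Your additional remark that the first coordinate of the lexicographic minimum coincides with the plain minimum of $d^r$ over the feasible set is a worthwhile clarification that the paper leaves implicit, but it does not constitute a different approach.
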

\begin{proof}{Proof:}
If $\bold{z}^1 \leq \bold{z}^2$, then $\bold{z}^2$ has at least as
many bus legs as $\bold{z}^1$. Hence, the feasible region of the
follower problem under $\bold{z}^1$ is a subset of the feasible region
under $\bold{z}^2$. $\square$
\end{proof}

\noindent
For a given ODMTS design and a trip $r$, the follower problem
\eqref{eq:lowerLevelProblem} returns a path of least cost and
inconvenience between $or^r$ and $de^r$. As a result, by Proposition
\ref{monotoneTheorem}, the ODMTS design with no bus leg gives an upper
bound on the value of $b^r$. Similarly, the ODMTS design with all legs
open returns a lower bound that can be inserted in the leader problem
to strengthen the formulation.

\subsubsection{Elimination of Arcs}
\label{sec:ArcElimination}

The follower problem \eqref{eq:lowerLevelProblem} considers all arcs
between nodes $i,j \in N$ for shuttle legs. However, only a subset of
these arcs are needed due to the triangular inequality on arc weights.
In particular, the follower problem needs only to consider arcs i)
from origin to hubs, ii) from hubs to destination, and iii) from
origin to destination. This set of arcs is denoted as $A^r$ in the
following. As a result, the bilevel optimization problem only uses the following
decision variables for each trip $r$:
\begin{align*}
y^r_{or^r h}, y^r_{h de^r} & \in \{0,1\} \quad \forall h \in H \\
y^r_{or^r de^r} & \in \{0,1\}.
\end{align*}

\section{Analytical Results on Trip Durations}
\label{SectionAnalyticalResultsonTripDurations}

This section presents analytical results that show how ODMTS designs
impact the duration of the routes proposed to riders. It focuses on
the general case where the trip origin and destination are not hub
locations: each such trip is of two possible forms: i) a combination
of legs including shuttle trips from origin to a hub and from a hub to
destination along with bus ride(s) between the hubs or ii) a direct
shuttle ride from origin to destination. Section
\ref{boundGenerationSection} derives upper and lower bounds on trip
durations when new arcs are added or existing arcs are removed from an
ODMTS design. Section \ref{liftingSection} identifies certain cases
where a trip duration does not worsen with the addition or removal of
arcs from a given design.  These results are used in Section
\ref{sec:solutionMethodologySection} in dedicated inequalities that
link ODMTS designs and rider choices. 

\subsection{Identification of bounds on the duration of the trips} 
\label{boundGenerationSection}

This section first derives upper bounds on trip durations when new
arcs are added to an ODMTS design. It then derives corresponding lower
bounds when arcs are removed from a design.

\begin{prop} \label{arcAdditionUBThm1}
Consider transit network design $\bold{z}^1$ and assume that the
optimal route for trip $r$ includes shuttle trips from origin $or^r$
to hub $m$ and from hub $n$ to destination $de^r$ with a trip time
$t^1$. For any network $\bold{z}^2 \geq \bold{z}^1$, the time $t^2$ of
the optimal route for trip $r$ admits the following upper bound: 
\begin{equation}
t^2 \leq t^1 + \frac{(1 - \theta)}{\theta} g \left(d_{or^rm}+ d_{nde^r} - \min_{h,l \in H} \left\{d_{or^rh} + d_{lde^r}\right\}\right) = UB^1.
\end{equation}
\end{prop}

\begin{proof}{Proof:}
Without loss of generality, assume that the optimal route of trip $r$
under design $\bold{z}^2$ includes the shuttle trips from origin
$or^r$ to hub $h'$ and from hub $l'$ to destination $de^r$.  Let
${b^r_1}^* = \theta t^1 + (1 - \theta) g (d_{or^rm} + d _{nde^r})$ and
${b^r_2}^* = \theta t^2 + (1 - \theta) g (d_{or^rh'} + d _{l'de^r})$
be the optimal objective function values under designs $\bold{z}^1$
and $\bold{z}^2$. If $\bold{z}^2 \geq \bold{z}^1$, then ${b^r_1}^*
\geq {b^r_2}^*$. It follows that:

\begin{align*}
\theta t^1 + (1 - \theta) g (d_{or^rm} + d _{nde^r}) & \geq \theta t^2 + (1 - \theta) g (d_{or^rh'} + d _{l'de^r}) \\
\theta t^1 + (1 - \theta) g \left(d_{or^rm} + d _{nde^r} - (d_{or^rh'} + d _{l'de^r})\right) & \geq \theta t^2 \\
t^1 + \frac{(1 - \theta)}{\theta} g \left(d_{or^rm} + d _{nde^r} - (d_{or^rh'} + d _{l'de^r})\right) & \geq t^2 \\
t^1 + \frac{(1 - \theta)}{\theta} g \left(d_{or^rm}+ d_{nde^r} - \min_{h,l \in H} \left\{d_{or^rh} + d_{lde^r}\right\}\right) & \geq t^2. \;\;\; \square
\end{align*}
\end{proof}

\begin{cor}
If $m$ is the closest hub to origin $or^r$ and $n$ is the closest hub
to destination $de^r$, then the upper bound in Proposition
\ref{arcAdditionUBThm1} reduces to $t^2 \leq t^1$.
\end{cor}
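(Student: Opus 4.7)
The plan is to unwind the definitions in Theorem~\ref{arcAdditionUBThm1} and observe that the minimization over hub pairs decouples. Since the expression $d_{or^rh} + d_{lde^r}$ is separable in $h$ and $l$, one has
\[
\min_{h,l \in H} \left\{d_{or^rh} + d_{lde^r}\right\} = \min_{h \in H} d_{or^rh} + \min_{l \in H} d_{lde^r}.
\]
Under the corollary's hypotheses, $m$ attains $\min_{h \in H} d_{or^rh}$ and $n$ attains $\min_{l \in H} d_{lde^r}$, so the joint minimum equals $d_{or^rm} + d_{nde^r}$.

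Next, I would plug this into the upper bound $UB^1$ stated in Theorem~\ref{arcAdditionUBThm1}. The bracketed term $d_{or^rm} + d_{nde^r} - \min_{h,l \in H}\{d_{or^rh} + d_{lde^r}\}$ reduces to zero, and the coefficient $(1-\theta)g/\theta$ is irrelevant. Hence $t^2 \leq t^1$, which is the claim.

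There is essentially no obstacle here, as the corollary is an immediate specialization of the theorem: the only content is the observation that a sum of independent single-variable minima equals the joint minimum over the product set. The short proof would therefore consist of a one-sentence justification of the separation of the min followed by substitution into the bound of Theorem~\ref{arcAdditionUBThm1}.
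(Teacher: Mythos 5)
Your proof is correct and is exactly the intended argument: since $\min_{h,l\in H}\{d_{or^rh}+d_{lde^r}\}$ separates into $\min_h d_{or^rh}+\min_l d_{lde^r}$, the hypothesis makes the bracketed term in $UB^1$ vanish. The paper leaves this corollary without an explicit proof precisely because it is this immediate specialization, so your write-up matches the paper's (implicit) reasoning.
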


\noindent This corollary indicates that, if the route of a trip
includes shuttle components from its origin and destination to the
closest hubs, then addition of arcs only makes the duration of the
trip better. For example, if a rider is already adopting the ODMTS
under the initial design, then these riders will keep adopting the
system under the new design as the duration of the trip can only get
shorter.

\begin{prop} \label{arcAdditionUBThm2}
Consider ODMTS design $\bold{z}^1$ and assume that the optimal route
for trip $r$ is a direct shuttle trip with trip time $t^1$. For any
ODMTS design $\bold{z}^2 \geq \bold{z}^1$, the time $t^2$
of the optimal route for trip $r$ satisfies the following upper bound:
\begin{equation}
t^2 \leq \max \left\{t^1, t^1 + \frac{(1 - \theta)}{\theta} g \left(d_{or^r de^r} - \min_{h,l \in H} \left\{d_{or^rh} + d_{lde^r}\right\}\right)\right\} = UB^2.
\end{equation}
\end{prop}
\begin{proof}{Proof:}
Under $\bold{z}^2$, the optimal route for trip $r$ involves either a direct trip from origin $or^r$ to destination $de^r$ or a combination of rides involving shuttle trips from origin $or^r$ to some hub $h'$, from some hub $l'$ to destination $de^r$, and bus rides between hubs $h', l'$. In the first case, observe that $t^1$ is an upper bound on the trip duration $t^2$. In the second case, 
\begin{align*}
\theta t^1 + (1 - \theta) g d_{or^r de^r} & \geq \theta t^2 + (1 - \theta) g (d_{or^rh'} + d _{l'de^r}) \\
\theta t^1 + (1 - \theta) g \left(d_{or^r de^r} - (d_{or^rh'} + d _{l'de^r})\right) & \geq \theta t^2 \\
t^1 + \frac{(1 - \theta)}{\theta} g \left(d_{or^r de^r} - \min_{h,l \in H} \left\{d_{or^rh} + d_{lde^r}\right\}\right) & \geq t^2.
\end{align*}
Depending on $\bold{z}^2$, both cases are possible and the result follows.  $\square$
\end{proof}

\noindent
When $\bold{z}^1$ has no hub legs open, the optimal route for trip $r$ takes time $t_{or^r
  de^r}$. Therefore, for any network $\bold{z}^2 \geq \bold{z}^1$, the upper bound using Proposition~\ref{arcAdditionUBThm2} becomes
\begin{equation}
t^2 \leq \max \left\{t_{or^r de^r}, t_{or^r de^r} + \frac{(1 - \theta)}{\theta} g \left(d_{or^r de^r} - \min_{h,l \in H} \left\{d_{or^rh} + d_{lde^r}\right\}\right)\right\}.
\end{equation}
If this upper bound value is duration of the direct route, then the
trip must be served by an on-demand shuttle. The following corollary
can thus be used as a pre-processing step to identify direct shuttle
trips.

\begin{cor} \label{corrollaryDirectTripCondition}
For any trip $r \in T$, if $\min_{h,l \in H} \left\{d_{or^rh} +
d_{lde^r}\right\} \geq d_{or^r de^r}$, then the trip will be served
with on-demand shuttles only.
\end{cor}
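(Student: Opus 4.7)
The plan is to derive the corollary as a direct consequence of Theorem \ref{arcAdditionUBThm2} by choosing a convenient baseline design and then ruling out bus-using routes via the triangle inequality on travel times. Concretely, I would instantiate Theorem \ref{arcAdditionUBThm2} with $\bold{z}^1$ equal to the empty design (no hub legs open); under $\bold{z}^1$ the only feasible route for trip $r$ in the follower problem is the direct shuttle with time $t^1=t_{or^r de^r}$. For any ODMTS design $\bold{z}^2 \geq \bold{z}^1$, the bound displayed immediately after Theorem \ref{arcAdditionUBThm2} applies. Under the corollary's hypothesis $\min_{h,l\in H}\{d_{or^rh}+d_{lde^r}\}\geq d_{or^rde^r}$, the second argument inside the $\max$ is at most $t_{or^rde^r}$, so the bound collapses to $t^2 \leq t_{or^rde^r}$.

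Next I would argue that no bus-using route can realize the lexicographic minimum under $\bold{z}^2$. By the arc-elimination preprocessing of Section \ref{sec:ArcElimination}, any route that uses at least one bus leg takes the form: a shuttle from $or^r$ to some hub $h'$, one or more bus legs ending at some hub $l'$, and a shuttle from $l'$ to $de^r$. Its total travel time is at least
\[
t_{or^r h'} + (t_{h'l'}+S) + t_{l' de^r} \;\geq\; t_{or^r de^r} + S,
\]
using the triangle inequality on $t_{ij}$. Comparing against the direct shuttle's time $t_{or^r de^r}$ and its $d^r$ value $\theta t_{or^r de^r}+(1-\theta)g\,d_{or^r de^r}$, the hypothesis forces $(1-\theta)g(d_{or^r h'}+d_{l' de^r}) \geq (1-\theta)g\,d_{or^r de^r}$, while the time term satisfies $\theta t^2 \geq \theta t_{or^r de^r}$; combined these inequalities show $d^r_{\text{bus}} \geq d^r_{\text{direct}}$, so any bus-using route is at best tied in the primary objective $d^r$.

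The remaining subtlety is tie-breaking on $f^r$. If the hypothesis holds with strict inequality, the distance term strictly favors the direct shuttle and there is nothing more to do. If equality is attained, a bus-using route could match $d^r_{\text{direct}}$, but its secondary objective $f^r = t_{or^r h'}+(t_{h'l'}+S)+t_{l' de^r}$ strictly exceeds $f^r_{\text{direct}}=t_{or^r de^r}$ as soon as $S>0$, so the lexicographic minimizer is still the direct shuttle. Therefore the follower problem assigns the trip a shuttle-only route, which is exactly the corollary's conclusion.

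The main obstacle is aligning the distance-based hypothesis with the time-based structure of the follower objective: the condition only compares distances via $d_{or^r\cdot}$ and $d_{\cdot de^r}$, whereas the selection between modes depends on the weighted combination in $d^r$ and on the pure-time tie-breaker $f^r$. Both the positivity of the bus waiting time $S$ and the triangle inequality on $t_{ij}$ are needed to convert the distance hypothesis into a strict preference for the direct shuttle in both components of the lexicographic objective.
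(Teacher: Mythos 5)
Your proof is correct and follows essentially the same route as the paper's: the paper argues by contradiction that any route using bus legs has shuttle distance at least $\min_{h,l \in H}\{d_{or^rh}+d_{lde^r}\} \geq d_{or^rde^r}$ and travel time at least that of the direct trip (by the triangle inequality), so it cannot beat the direct shuttle in $d^r$. Your additional handling of the $f^r$ tie-break via $S>0$ (and the preliminary detour through Theorem~\ref{arcAdditionUBThm2}, which is not strictly needed) only makes the argument slightly more careful than the paper's own one-paragraph version.
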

\begin{proof}{Proof:}
The proof is by contradiction. Suppose that, trip $r$ is served with
on-demand shuttles to and from hubs, and bus leg(s) between hubs under
a network $\bold{z}^2$ where $\bold{z}^2 \geq \bold{z}^1$. Without
loss of generality, assume that the origin is connected to hub $m$ and
hub $n$ is connected to the destination. Then, $d_{or^r m} + d_{n
  de^r} \geq \min_{h,l \in H} \left\{d_{or^rh} + d_{lde^r}\right\}
\geq d_{or^r de^r}$. Moreover, the time of this route is at least the
time of the direct trip by the triangle inequality, contradicting the
hypothesis by definition of $b^r$. $\square$
\end{proof}

The next results derive lower bounds on trip durations. 

\begin{prop} \label{arcSubtractionLBTheorem1}
Consider ODMTS design $\bold{z}^1$, and assume that the optimal route
for trip $r$ includes the shuttle trips from origin $or^r$ to hub $m$
and from hub $n$ to destination $de^r$ with a trip time $t^1$. For any
design $\bold{z}^2$ such $\bold{z}^1 \geq \bold{z}^2$, the time $t^2$
of the optimal route for trip $r$ has a lower bound as
\begin{equation}
t^2 \geq t^1 + \frac{(1 - \theta)}{\theta} g \left(d_{or^rm} + d_{nde^r} - \max \left \{\max_{h,l \in H} \left\{d_{or^rh} + d_{lde^r} \right\}, d_{or^r de^r} \right\} \right) = LB^1.
\end{equation}
\end{prop}

\begin{proof}{Proof:}
Observe first that the optimum $b^r$ value for trip $r$ under
$\bold{z}^1$ is greater than or equal to the corresponding value under
network design $\bold{z}^2$. Without loss of generality, assume that
the optimum route of trip $r$ under design $\bold{z}^2$ includes either the
shuttle trips from origin $or^r$ to hub $h'$ and from hub $l'$ to
destination $de^r$, or a direct shuttle trip from origin $or^r$ to
destination $de^r$. In the first case,
\begin{align*}
\theta t^1 + (1 - \theta) g (d_{or^rm} + d _{nde^r}) & \leq \theta t^2 + (1 - \theta) g (d_{or^rh'} + d _{l'de^r}) \\
t^1 + \frac{(1 - \theta)}{\theta} g \left(d_{or^rm} + d _{nde^r} - (d_{or^rh'} + d _{l'de^r})\right) & \leq t^2 \\
t^1 + \frac{(1 - \theta)}{\theta} g \left(d_{or^rm}+ d_{nde^r} - \max_{h,l \in H} \left\{d_{or^rh} + d_{lde^r}\right\}\right) & \leq t^2.
\end{align*}
In the second case,
\begin{equation*}
t^1 + \frac{(1 - \theta)}{\theta} g \left(d_{or^rm}+ d_{nde^r} - d_{or^r de^r} \right) \leq t^2,
\end{equation*}
completing the proof. $\square$
\end{proof}

\begin{prop} \label{arcSubtractionLBTheorem2}
Consider ODMTS design $\bold{z}^1$, and assume that the optimal route
for trip $r$ is a direct shuttle trip from origin $or^r$ to
destination $de^r$ with a trip time~$t^1$. For any network
$\bold{z}^2$, $\bold{z}^1 \geq \bold{z}^2$, the time $t^2$ of the
optimum route for trip $r$ will be $t^2 = t^1 = LB^2$.
\end{prop}

\begin{proof}{Proof:}
As the feasible solutions under $\bold{z}^2$ is a subset of the
feasible solutions under $\bold{z}^1$, the optimum route of trip $r$ with
respect to the follower problem will remain as a direct shuttle trip
from origin $or^r$ to destination $de^r$. $\square$
\end{proof}

\subsection{Specific Network Designs}
\label{liftingSection}

This section presents two specific but important cases where the
duration of the studied trip cannot become worse when more bus legs
are added. The first case considers a trip route where shuttles
connect the origin and destination to hubs and where additional arcs
do not make closer hubs available. Given ODMTS design~$\bold{z}$,
define the set of \textit{active} hubs $\mathcal{H}(\bold{z}) = \{h
\in H: \sum_{l \in H} z_{hl} > 0\}$. Due to the weak connectivity
constraint~\eqref{eq:upperLevelConstr1}, $\sum_{l \in H} z_{hl} > 0$
implies $\sum_{l \in H} z_{lh} > 0$ for all $h \in H$. Define the
following minimum distances from/to node $i \in N$ to/from any active
hub under $\bold{z}$ as $\overrightarrow{d}_i^{\min} (\bold{z}) :=
\min_{h \in \mathcal{H}(z)}\{d_{ih}\}$ and $\overleftarrow{d}_i^{\min}
(\bold{z}) := \min_{h \in \mathcal{H}(z)}\{d_{hi}\}$. Finally, define
$\overrightarrow{W}_{i} (z) = \{h \in H \setminus \mathcal{H}(z):
d_{ih} < \overrightarrow{d}_i^{\min} (z)\}$ and $\overleftarrow{W}_{i}
(z) = \{h \in H \setminus \mathcal{H}(z): d_{hi} <
\overleftarrow{d}_i^{\min} (z)\}$ as the set of non-active hubs that
are closer to the origin and destination than the active hubs
respectively. The next proposition shows that, if the non-active hubs
closer to the origin and destination of a trip $r$ in the current
design remain inactive in a larger design, the duration of trip $r$
can only improve. 

\begin{prop} \label{arcAdditionLiftingThm}
Consider ODMTS design $\bold{z}^1$, and assume that the optimal
route for trip $r$ includes the shuttle trips from origin $or^r$ to hub
$m$, and from hub $n$ to destination $de^r$, with a trip time
$t^1$. If $m$ and $n$ are the closest active hubs to the origin and
destination, i.e., $d_{or^r m} = \overrightarrow{d}_{or^r}^{\min}
(\bold{z}^1)$ and $d_{n de^r} = \overleftarrow{d}_{de^r}^{\min}(\bold{z}^1)$, then
for any network design $\bold{z}^2$ satisfying
\begin{align*}
\bold{z}^2 \in \{\bold{z} \in \{0,1\}^{|H| \times |H|}: & z_{hl} = 1 \>\> \forall \> (h,l) \> s.t. \> z^1_{hl} = 1, \\
& \sum_{l \in H} z_{hl} = 0 \>\> \forall \> h \in \overrightarrow{W}_{or^r} (\bold{z}^1), \\
& \sum_{l \in H} z_{hl} = 0 \>\> \forall  \> h \in \overleftarrow{W}_{de^r} (\bold{z}^1) \},
\end{align*}
then the time $t^2$ of the optimal route for trip $r$ in $\bold{z}^2$ satisfies $t^2 \leq t^1$.
\end{prop}

\begin{proof}{Proof:}
By definition of $\bold{z}^2$, $\overrightarrow{d}_{or^r}^{\min}
(\bold{z}^1) = \overrightarrow{d}_{or^r}^{\min} (\bold{z}^2)$ and
$\overleftarrow{d}_{de^r}^{\min} (\bold{z}^1) =
\overleftarrow{d}_{de^r}^{\min} (z^2)$. This implies that $d_{or^r h}
\geq \overrightarrow{d}_{or^r}^{\min} (\bold{z}^2)$ and $d_{h de^r}
\geq \overleftarrow{d}_{de^r}^{\min} (\bold{z}^2)$ for all hubs $h \in
\mathcal{H}(\bold{z}^2)$. Since the cost only depends on the distance
of the shuttle rides, the cost of the optimal route under $\bold{z}^1$
is $g (\overrightarrow{d}_{or^r}^{\min} (\bold{z}^1) +
\overleftarrow{d}_{de^r}^{\min} (\bold{z}^1))$, and the corresponding cost
under  $\bold{z}^2$ become $g (d'_1 +
d'_2)$, where $d'_1 \geq \overrightarrow{d}_{or^r}^{\min} (z^1)$ and
$d'_2 \geq \overleftarrow{d}_{de^r}^{\min} (z^1)$. Since the latter
cost is greater than or equal to the former one, and $z^2 \geq z^1$,
it must be the case that $t^2 \leq t^1$. $\square$
\end{proof}

The next result identifies the set of arcs whose removal from the transit design do not impact the duration of the associated trip. 

\begin{prop} \label{arcRemovalLiftingThm}
Consider design $\bold{z}^1$, and assume that the optimal route of
trip $r$ takes time $t^1$. If design $\bold{z}^2$ is obtained from
$\bold{z}^1$ by removing some arcs that are not used on the optimal
route for $r$, then the trip duration for $r$ under $\bold{z}^2$
remains $t^1$.
\end{prop}

\section{Solution Methodology}
\label{sec:solutionMethodologySection}

This section proposes a solution methodology that {\color{black}decomposes} the
bilevel problem \eqref{eq:upperLevelProblemUpdated2} into a master
problem and subproblems. The approach combines a traditional Benders
decomposition \textcolor{black}{\citep{Benders1962}} to generate optimality cuts with combinatorial Benders
cuts to reconcile rider choices in the master problem with those
induced by the optimal routes in the follower subproblems. \textcolor{black}{In that sense, it is reminiscent of logical Benders and Branch-and-Check methods pioneered in   \citet{Hooker2002_IJOC,Thorsteinsson2001,HookerOttoson_2003, Hooker2007}.} More specifically, the master problem consists of the leader problem with
variables $(\{z_{hl}\}_{h,l \in H}, \{\delta^r\}_{r \in T'},
\{b^r\}_{r \in T})$ where the rider choice constraint
\eqref{eq:userChoiceModel} is relaxed. In each iteration, the follower
subproblems are solved to generate optimality cuts on variables $b^r$.
In addition, combinatorial cuts are introduced to guarantee the
consistency between ${\cal C}^r(\bold{x}^r,\bold{y}^r)$ and the master
variable $\delta^r$. These ``basic'' combinatorial cuts are further
improved using the results of Section
\ref{SectionAnalyticalResultsonTripDurations}. The proposed
decomposition algorithm converges when the lower bound obtained by the
master problem, and the upper bound constructed from the feasible
solutions of the subproblems are close enough.

The rest of this
section formally introduces the decomposition algorithm along with the
several enhancements. Section \ref{sec:RelaxedMaster} and Section
\ref{sec:Subproblem} present the master problem and the Benders
subproblems. Section \ref{sec:CutGeneration} proposes the cut
generation procedure for the optimality cut and the combinatorial cuts
for coupling the choice model and the network design.
Section
\ref{sec:DecAlgSummary} summarizes the decomposition algorithm, and
proves its finite convergence. Section
\ref{sec:ValidIneq} proposes valid inequalities that enforce the
relationship between the ODMTS designs and the rider choices. Finally, Section
\ref{sec:ParetoOptCuts} discusses Pareto-optimal cut generation
procedure for enhancing the performance of the solution methodology.

\subsection{Master Problem}
\label{sec:RelaxedMaster}

\textcolor{black}{To formally present the decomposition algorithm, the bilevel problem \eqref{eq:upperLevelProblemUpdated2} can be equivalently written as in the following form,}
{\color{black}
\begin{subequations} \label{eq:equivalentModel}
\begin{alignat}{1}
\min \quad & \sum_{h,l \in H} \beta_{hl} z_{hl} + \sum_{r \in T \setminus T'} p^r b^r + \sum_{r \in T'} p^r \delta^r (b^r - \varphi) \\
\text{s.t.} \quad & \eqref{eq:upperLevelConstr1}, \eqref{eq:binaryConstraint}, \eqref{eq:continuousConstraint}, \notag \\
& \mathcal{L'}^r (z, \delta^r) \geq 0 \quad \forall r \in {T'}, \label{eq:consistencyCutSet} \\
& \mathcal{L}^r (z, b^r) \geq 0 \quad \forall r \in T \label{eq:optimalityCutSet}.
\end{alignat}
\end{subequations}
}

\noindent
\textcolor{black}{The constraint set $\mathcal{L'}^r (z, \delta^r)$ in \eqref{eq:consistencyCutSet} corresponds to all combinatorial cuts that ensure the consistency between the network design and the choice variables, and the constraint set $\mathcal{L}^r (z, b^r)$ in \eqref{eq:optimalityCutSet} provide an explicit formulation of the follower problem, as traditionally done in deriving Benders decomposition methods. In particular, these cuts provide lower bounds on the $b^r$ values based on the follower problem. All of the cuts in \eqref{eq:consistencyCutSet} and \eqref{eq:optimalityCutSet} can be precomputed to obtain an equivalent formulation,
but they add exponentially many constraints. Thus, the proposed decomposition algorithm starts with a subset of them and dynamically adds the corresponding constraints as new network designs are identified, along with the addition of valid inequalities based on the analytical results on trip durations.}

\textcolor{black}{To this end, the initial master problem \eqref{eq:relaxedMasterProblemInitial} can be formulated as a relaxation of the problem~\eqref{eq:equivalentModel}:}
{\color{black}
\begin{subequations} \label{eq:relaxedMasterProblemInitial}
\begin{alignat}{1}
\min \quad & \sum_{h,l \in H} \beta_{hl} z_{hl} + \sum_{r \in T \setminus T'} p^r b^r + \sum_{r \in T'} p^r (\nu^r - \delta^r \varphi) \label{eq:masterObjectiveFunction} \\
\text{s.t.} \quad & \eqref{eq:upperLevelConstr1}, \eqref{eq:binaryConstraint}, \eqref{eq:continuousConstraint}, \eqref{eq:objLinearization}. \notag 
\end{alignat}
\end{subequations}
}
\noindent
At each iteration of the algorithm, the relaxed master problem
\eqref{eq:relaxedMasterProblemInitial} determines an ODMTS design to
be evaluated by the subproblems. Benders cuts and combinatorial
cuts are then added to this problem following the procedure proposed in
Section \ref{sec:CutGeneration} \textcolor{black}{along with the valid inequalities introduced in Section \ref{sec:ValidIneq}} to ensure optimality and
consistency between the rider choices in the master problem and the
follower routes.

\subsection{Subproblem for Each Trip}
\label{sec:Subproblem}


Given a transit network design solution $\{\bar{z}_{hl}\}_{h,l \in H}$
obtained by the master problem, the subproblem for each trip $r$ can
be formulated using the follower problem~\eqref{eq:lowerLevelProblem}
over the objective function $\hat{b}^r = M b^r + f^r$ and its
associated coefficients {\color{black} $\hat{\tau}_{hl}^r := M \tau_{hl}^r + t_{hl} + s$ and $\hat{\gamma}_{ij}^r := M \gamma_{ij}^r + t_{ij}$ $\hat{\tau}_{hl}$}.
The resulting problem can be formulated as follows: 
\begin{subequations}  \label{eq:subproblem}
\begin{alignat}{1}
\min \quad & \sum_{h,l \in H}  \hat{\tau}_{hl}^r x_{hl}^r + \sum_{i,j \in A^r} \hat{\gamma}_{ij}^r y_{ij}^r  \label{eq:lowerLevelObjSubproblem} \\
\text{s.t.} \quad & \sum_{\substack{h \in H \\ \text{if } i \in H}} (x_{ih}^r - x_{hi}^r) + \sum_{i,j \in A^r}  (y_{ij}^r - y_{ji}^r) = \begin{cases}
1 &, \text{if  } i = or^r \\
-1 &, \text{if  } i = de^r \\
0 &, \text{otherwise}
\end{cases}
\quad \forall i \in N \label{eq:minFlowConstraintSubprob} \\
& x_{hl}^r \leq \bar{z}_{hl} \quad \forall h,l \in H \label{eq:openFacilityOnlyAvailableSubprob} \\
& 0 \leq x_{hl}^r \leq 1, \quad \forall h,l \in H, 0 \leq y_{ij}^r \leq 1 \quad \forall i,j \in A^r. \label{eq:integralityFlowConstrSubprob}
\end{alignat}
\end{subequations}

\noindent
The model exploits the totally unimodular property of the follower
problem under a given binary solution $\{\bar{z}_{hl}\}_{h,l \in H}$
and uses the arc set $A^r$, eliminating the unnecessary arcs for the
on-demand shuttles. The dual of subproblem \eqref{eq:subproblem} is
expressed in terms of the dual variables $u^r_{i}$ and $v_{hl}^r$ that
correspond to constraints \eqref{eq:minFlowConstraintSubprob} and
\eqref{eq:openFacilityOnlyAvailableSubprob}:
\begin{subequations} \label{eq:dualLowerLevel}
\begin{alignat}{1}
\max \quad & (u_{or^r}^r - u_{de^r}^r) - \sum_{h,l \in H} \bar{z}_{hl} v^r_{hl} \\
\text{s.t.} \quad & u_h^r - u_l^r - v_{hl}^r \leq \hat{\tau}_{hl}^r \quad \forall h,l \in H \label{eq:dualLowerLevelConstr1} \\
& u_i^r - u_j^r \leq \hat{\gamma}_{ij}^r \quad \forall i,j \in A^r \label{eq:dualLowerLevelConstr2} \\
& u_i^r \geq 0 \quad \forall i \in N, v_{hl}^r \geq 0 \quad \forall h,l \in H. \label{eq:dualLowerLevelConstr3}
\end{alignat}
\end{subequations}
Note the primal subproblem \eqref{eq:subproblem} is always feasible
and bounded as each trip can be served by a direct shuttle
trip. Therefore, the dual subproblem \eqref{eq:dualLowerLevel} is
feasible and bounded as well. Benders optimality cuts in the form
\begin{equation} \label{eq:optimalityCutLowerLevel} 
d^{r} \geq (\bar{u}^{r}_{or^{r}} - \bar{u}^{r}_{de^{r}}) - \sum_{h,l \in H} z_{hl} \bar{v}_{hl}^{r}
\end{equation}
are thus added to the master problem at each iteration using the
optimal solution $(\bar{u}^r, \bar{v}^r)$ of the dual subproblem.

\subsection{Cut Generation Procedure}
\label{sec:CutGeneration}

This section presents how to achieve the consistency of the rider
choices in the master problem and those induced by the subproblems.

\begin{dfn}[Choice Consistency]
For a given trip $r$, the solution values $\{\bar{z}_{hl}\}_{h,l \in
  H}$ and $\bar{\delta}^{r}$ of the master problem are
\textit{consistent} with an optimal solution $(\bold{\bar{x}^r},
\bold{\bar{y}^r}, \bar{b}^r)$ of the follower problem
\eqref{eq:lowerLevelProblem} under the design $\{\bar{z}_{hl}\}_{h,l
  \in H}$ if $\bar{\delta}^{r} = {\cal C}^r(\bold{\bar{x}^r},
\bold{\bar{y}^r})$.
\end{dfn}

\noindent
To ensure choice consistency \textcolor{black}{between the choice variable $\delta^{r}$ and the evaluated choice function ${\cal C}^r$ under a given network design $\bold{z}$}, two possible cases must be considered:
\begin{enumerate}
\item Solution values $\{\bar{z}_{hl}\}_{h,l \in H}$ and $\bar{\delta}^{r}$ are \textit{inconsistent} with ${\cal C}^r(\bold{\bar{x}^r}, \bold{\bar{y}^r})$ when
\begin{enumerate}
\item $\bar{\delta}^{r} = 1$ and ${\cal C}^r(\bold{\bar{x}^r}, \bold{\bar{y}^r}) = 0$;
\item $\bar{\delta}^{r} = 0$ and ${\cal C}^r(\bold{\bar{x}^r}, \bold{\bar{y}^r}) = 1$.
\end{enumerate}
\item Solution values $\{\bar{z}_{hl}\}_{h,l \in H}$ and $\bar{\delta}^{r}$ are \textit{consistent} with ${\cal C}^r(\bold{\bar{x}^r}, \bold{\bar{y}^r})$.
\end{enumerate}

\noindent
By Proposition~\ref{lexicographicMinimumProp}, the lexicographic
minimum of problem \eqref{eq:lowerLevelProblem} is unique and hence
the routes of the lexicographic minimizers have the same cost and
inconvenience under a given ODMTS design.  Therefore, it is sufficient
to relate the rider choices with the ODMTS design to ensure the
consistency in these decisions. In particular, the first inconsistency
(case 1(a)) can be eliminated with the combinatorial cut \textcolor{black}{\eqref{eq:case2Cut} by ensuring $\delta^r$ to be 0 under the design $\bar{z}$.}
\begin{equation} \label{eq:case2Cut}
\sum_{(h,l):\bar{z}_{hl}=0} z_{hl} + \sum_{(h,l):\bar{z}_{hl}=1} (1-z_{hl}) \geq \delta^{r}.
\end{equation}


\noindent
The second inconsistency (case 1(b)) can be eliminated with the cut \textcolor{black}{\eqref{eq:case3Cut} by ensuring $\delta^r$ to be 1 under the design $\bar{z}$.}
\begin{equation} \label{eq:case3Cut}
\sum_{(h,l):\bar{z}_{hl}=0} z_{hl} + \sum_{(h,l):\bar{z}_{hl}=1} (1-z_{hl}) + \delta^{r} \geq 1.
\end{equation}


\noindent
Combinatorial cuts \eqref{eq:case2Cut} and \eqref{eq:case3Cut}
guarantee the consistency between the rider choice variables and the
choices induced by $\bar{\bold{z}}$. We can further strengthen these
cuts by exploiting the properties of the choice
model~\eqref{eq:ChoiceFunctionTime}.  Based on the analyses in Section
\ref{SectionAnalyticalResultsonTripDurations}, it is possible to add
new valid inequalities to the master problem at each iteration.

\begin{thm}
{\color{black}
Problem \eqref{eq:equivalentModel} is equivalent to the original Problem in Figure~\ref{fig:bilevel}.
}
\end{thm}

{\color{black}
\begin{proof}{Proof:}
Combinatorial cuts \eqref{eq:case2Cut} and \eqref{eq:case3Cut} constitute the consistency cut set \eqref{eq:consistencyCutSet}, whereas Constraint \eqref{eq:optimalityCutSet} represents the cuts \eqref{eq:optimalityCutLowerLevel}. Since $b^r$ is multiplied by a non-negative coefficient in the objective of the leader problem in Figure~\ref{fig:bilevel} and there are finitely many cuts in the form \eqref{eq:optimalityCutLowerLevel}, \eqref{eq:case2Cut}, \eqref{eq:case3Cut}, Problem \eqref{eq:equivalentModel} is equivalent to the original problem. 
\Halmos
\end{proof}
}

\subsection{The Decomposition Algorithm}
\label{sec:DecAlgSummary}


\textcolor{black}{With these definitions in place, it is possible to present the decomposition algorithm, which is summarized in Algorithm~\ref{alg:bilevelNetworkDecomposition}. The algorithm is guaranteed to converge to an optimal solution of Problem \eqref{eq:equivalentModel}.}

\begin{algorithm}
\caption{Decomposition Algorithm}
\label{alg:bilevelNetworkDecomposition}
\begin{algorithmic}[1]
\State Set $LB = -\infty$,  $UB = \infty$, $z^*=\emptyset$. 
\While {$UB > LB + \epsilon$}
\State Solve the relaxed master problem \eqref{eq:relaxedMasterProblemInitial} and obtain the solution ($\{\bar{z}_{hl}\}_{h,l \in H}$, $\{\bar{\delta}^r\}_{r \in T'}$, $\{\bar{b}^r\}_{r \in T})$.  
\State Update $LB$.
\ForAll{$r \in T$}
\State Solve the subproblem \eqref{eq:dualLowerLevel} under $\{\bar{z}_{hl}\}_{h,l \in H}$ and obtain $({b^r}^*, {f^r}^*)$. 
\State Add optimality cut in the form \eqref{eq:optimalityCutLowerLevel} to the relaxed master problem \eqref{eq:relaxedMasterProblemInitial}. 
\EndFor
\ForAll{$r \in T'$}
\If{$\{\bar{z}_{hl}\}_{h,l \in H}$ and $\bar{\delta}^{r}$ are  inconsistent with ${\cal C}^r(\bold{\bar{x}^r}, \bold{\bar{y}^r})$}
\State Add cuts in the form \eqref{eq:case2Cut} or \eqref{eq:case3Cut} to the relaxed master problem. \label{algorithmNogoodCutStep}
\EndIf
\State Add cuts discussed in Section \ref{sec:ValidIneq} if the {\color{black}sufficient} conditions are satisfied.
\If{${\cal C}^r(\bold{x^r}, \bold{y^r})$ is 1} 
\State Set $\hat{\delta}^r = 1$.
\Else
\State Set $\hat{\delta}^r = 0$.
\EndIf
\EndFor
\State $\widehat{UB} = \sum_{h,l \in H} \beta_{hl} \bar{z}_{hl} + \sum_{r \in T \setminus T'} p^r {b^r}^* + \sum_{r \in T'} p^r \hat{\delta}^r ({b^r}^* - \varphi)$.
\If{$\widehat{UB} < UB$} 
\State Update $UB$ as $\widehat{UB}$, $z^* = \bar{z}$. 
\EndIf
\EndWhile
\end{algorithmic}
\end{algorithm}

\begin{prop}
Algorithm \ref{alg:bilevelNetworkDecomposition} converges \textcolor{black}{to an optimal solution of Problem \eqref{eq:equivalentModel} in finitely many iterations}. 
\end{prop}
\begin{proof}{Proof:}
\textcolor{black}{First observe that there are finitely many combinatorial cuts
\eqref{eq:case2Cut} and \eqref{eq:case3Cut} that can be added to
ensure the relationship between network design and rider preferences
as all variables are binary. Similarly, there are finitely many 
optimality cuts of the form \eqref{eq:optimalityCutLowerLevel}, since
there are finitely many vertices in the dual follower subproblems. Hence Algorithm \ref{alg:bilevelNetworkDecomposition} is guaranteed to terminate.}

\textcolor{black}{It remains to show that it terminates with an optimal solution. Observe that the master problem provides a lower bound to Problem \eqref{eq:equivalentModel}, since it contains only a subset of the cuts. Moreover, at each iteration, Algorithm \ref{alg:bilevelNetworkDecomposition} computes a valid upper bound $\widehat{UB}$. If $\bar{b}^r = b^{r^*}$ for all $r \in T \setminus T'$, $\bar{\delta}^{r} = \hat{\delta}^r$ for all $r \in T'$, and $\bar{b}^r = b^{r^*}$ for all $r \in T'$ such that $\bar{\delta}^{r}=1$, the upper bound and the lower bound are the same, and the algorithm terminates with an optimal solution. Otherwise, it suffices to show that the algorithm generates at least one new cut. For $r \in T \setminus T'$, if $\bar{b}^r$ in the master problem is smaller than $b^{r^*}$, then the algorithm generates a new optimality cut (line 7). For $r \in T'$, if $\bar{\delta}^{r} \neq \hat{\delta}^r$, then the algorithm generates a new cut in line 9--10. If the choices are consistent and rider $r$ adopts the system (i.e., $\bar{\delta}^{r} = 1$), then the algorithm generates a new optimality cut if $\bar{b}^r$ in the master problem is smaller than $b^{r^*}$ (line 7 again). This concludes the proof. }
\Halmos
\end{proof}

\subsection{Valid Inequalities}
\label{sec:ValidIneq}

This section proposes valid inequalities for the studied problem
\eqref{eq:upperLevelProblemUpdated2} to strengthen the relationship
between transit network design and rider choice variables. The first
result utilizes the upper bound values on the duration of the trips.

\begin{lem} \label{arcAdditionUBProp}
For ODMTS design~$\bold{z}^1$, consider the upper bound $UB$ in
Propositions \ref{arcAdditionUBThm1} and \ref{arcAdditionUBThm2}. If a
rider of trip $r$ adopts the transit system under $\bold{z}^1$, and $UB \leq \alpha^r t^r_{cur}$, then the rider also adopts the
ODMTS under any design $\bold{z}^2$ such that $\bold{z}^1 \leq \bold{z}^2$, 
\end{lem}

\noindent
Lemma~\ref{arcAdditionUBProp} allows for the design of combinatorial
cuts \textcolor{black}{that strengthen the consistency cuts introduced
  in \eqref{eq:case3Cut}, by exploiting the property that a rider
  keeps adopting the system under any design with at least the bus
  legs open in $\bold{z}^1$.}

\begin{prop}
For a given transit network design~$\bold{z}^1$, if the condition in
Lemma \ref{arcAdditionUBProp} holds for trip $r$, then the
consistency cut becomes
\begin{equation} \label{eq:case2CutStronger}
\sum_{(h,l):z^1_{hl}=1} (1 - z_{hl}) + \delta^{r} \geq 1.
\end{equation}
\end{prop}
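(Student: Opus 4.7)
The plan is to argue the validity of the strengthened cut by a case split on whether the candidate design $\bold{z}$ dominates $\bold{z}^1$ componentwise, i.e., whether $z_{hl} \geq z^1_{hl}$ for every pair $(h,l) \in H \times H$. This mirrors the structure of the basic cut \eqref{eq:case3Cut}, but exploits the monotonicity statement in Proposition \ref{arcAdditionUBProp} to weaken what the ``nogood'' side of the inequality has to encode: rather than forbidding the full configuration $\bold{z}^1$, it suffices to forbid every superset of $\bold{z}^1$ (in terms of open legs) at which $\delta^r$ is still allowed to equal $0$.

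In the first case, assume $z_{hl} \geq z^1_{hl}$ for all $(h,l)$, so that $\sum_{(h,l):z^1_{hl}=1}(1 - z_{hl}) = 0$. By hypothesis the rider of trip $r$ adopts the ODMTS under $\bold{z}^1$ and the quantity $UB$ from Theorem \ref{arcAdditionUBThm1} or Theorem \ref{arcAdditionUBThm2} satisfies $UB \leq \alpha^r t^r_{cur}$. Applying Proposition \ref{arcAdditionUBProp} with $\bold{z}^2 = \bold{z}$, I conclude that the rider also adopts the ODMTS under $\bold{z}$, so the lexicographic optimum $(\bold{x^r},\bold{y^r})$ of the follower problem yields ${\cal C}^r(\bold{x^r},\bold{y^r}) = 1$. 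Since every feasible solution of the bilevel problem \eqref{eq:upperLevelProblemUpdated2} enforces \eqref{eq:userChoiceModel}, this forces $\delta^r = 1$, and the inequality \eqref{eq:case2CutStronger} holds with equality on the right-hand side.

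In the complementary case there exists at least one pair $(h,l)$ with $z^1_{hl}=1$ and $z_{hl}=0$, so $\sum_{(h,l):z^1_{hl}=1}(1 - z_{hl}) \geq 1$ and the inequality is trivially satisfied regardless of $\delta^r$. Combining the two cases shows that \eqref{eq:case2CutStronger} is valid for every feasible solution of the bilevel problem, establishing the claim. As a sanity check, note that this cut dominates the baseline consistency cut \eqref{eq:case3Cut}: the latter also contains the positive term $\sum_{(h,l):z^1_{hl}=0} z_{hl}$, which the stronger version drops, so any solution cut off by \eqref{eq:case3Cut} is cut off by \eqref{eq:case2CutStronger}.

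The proof itself is short because Proposition \ref{arcAdditionUBProp} already does the analytic work; the only real step is recognizing that the hypothesis of that proposition, namely $\bold{z}^2 \geq \bold{z}^1$, corresponds exactly to the vanishing of $\sum_{(h,l):z^1_{hl}=1}(1 - z_{hl})$. I do not expect a genuine obstacle, but I will be careful to point out that the cut does \emph{not} require any constraint of the form $\sum_{(h,l):z^1_{hl}=0} z_{hl} = 0$; it is precisely the monotonicity of adoption under arc additions, established via the upper bounds on trip durations in Section \ref{boundGenerationSection}, that lets the inequality relax this condition while remaining valid.
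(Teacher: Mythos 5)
Your proof is correct and follows exactly the argument the paper intends: the paper leaves this proposition unproved, but its proof of the analogous lifted-design cut uses the identical pattern (if all legs of $\bold{z}^1$ remain open the first sum vanishes and Proposition~\ref{arcAdditionUBProp} forces $\delta^r=1$; otherwise the first sum is at least $1$ and the inequality is trivial). Your closing remark that the cut dominates \eqref{eq:case3Cut} by dropping the nonnegative term $\sum_{(h,l):z^1_{hl}=0} z_{hl}$ is also accurate.
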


\noindent
The second result exploits the lower bound values on the duration of the trips.

\begin{lem} \label{arcSubtractionLBProp}
For design~$\bold{z}^1$, consider the lower bound $LB$ on trip
duration as derived in Propositions \ref{arcSubtractionLBTheorem1} and
\ref{arcSubtractionLBTheorem2}.  If a rider of trip $r$ \textcolor{black}{does} not adopt
the ODMTS under $\bold{z}^1$, and $LB \geq \alpha^r t^r_{cur}$, then
the rider will not adopt the ODMTS under any network design
$\bold{z}^2$ such that $\bold{z}^1 \geq \bold{z}^2$.
\end{lem}

\noindent
Lemma \ref{arcSubtractionLBProp} enables the derivation of
combinatorial cuts \textcolor{black}{that strengthen the consistency
  cuts introduced in \eqref{eq:case2Cut}, by benefiting from the
  conditions that the riders continue using their personal vehicles
  under any design with at most the bus legs open in $\bold{z}^1$.}

\begin{prop}
For a given design~$\bold{z}^1$, if the condition in Lemma \ref{arcSubtractionLBProp} holds for trip $r$, then consistency cut becomes
\begin{equation} \label{eq:case2CutStrongerRem}
\sum_{(h,l):z^1_{hl}=0} z_{hl} \geq \delta^{r}.
\end{equation}
\end{prop}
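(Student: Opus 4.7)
The plan is to verify the cut as a direct combinatorial encoding of the monotonicity statement in Proposition~\ref{arcSubtractionLBProp}. Intuitively, the inequality $\sum_{(h,l):z^1_{hl}=0} z_{hl} \geq \delta^{r}$ forces the rider variable $\delta^{r}$ to be zero whenever the candidate design $\bold{z}$ opens no arcs outside those already open in $\bold{z}^1$ (i.e., whenever $\bold{z} \leq \bold{z}^1$), which is exactly the regime in which Proposition~\ref{arcSubtractionLBProp} guarantees non-adoption.

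First I would split on the value of $\delta^{r} \in \{0,1\}$. If $\delta^{r} = 0$, the inequality reduces to $\sum_{(h,l):z^1_{hl}=0} z_{hl} \geq 0$, which holds trivially by the non-negativity of the binary variables $z_{hl}$. The only substantive case is thus $\delta^{r} = 1$, where the goal is to show that at least one previously unopened arc must be opened, that is, $\sum_{(h,l):z^1_{hl}=0} z_{hl} \geq 1$.

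Next I would argue by contrapositive. Suppose $\sum_{(h,l):z^1_{hl}=0} z_{hl} = 0$. Then $z_{hl} = 0$ for every $(h,l)$ with $z^1_{hl} = 0$, while $z_{hl} \leq 1 = z^1_{hl}$ on the remaining pairs, so the componentwise relation $\bold{z} \leq \bold{z}^1$ holds. Since by hypothesis the rider of trip $r$ does not adopt the ODMTS under $\bold{z}^1$ and $LB \geq \alpha^{r} t^{r}_{cur}$, Proposition~\ref{arcSubtractionLBProp} applied to $\bold{z}$ yields non-adoption under $\bold{z}$ as well. Choice consistency, which is enforced at the feasible solutions of the master problem through the coupling cuts of Section~\ref{sec:CutGeneration}, then forces $\delta^{r} = {\cal C}^{r}(\bold{x^r}, \bold{y^r}) = 0$, contradicting $\delta^{r} = 1$. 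This establishes the cut.

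The main point to watch is mostly bookkeeping: one must align the componentwise inequality $\bold{z} \leq \bold{z}^1$ with the hypothesis $\bold{z}^1 \geq \bold{z}^2$ of Proposition~\ref{arcSubtractionLBProp}, and rely on the fact that the latter already absorbs both lower-bound regimes from Theorems~\ref{arcSubtractionLBTheorem1} and \ref{arcSubtractionLBTheorem2}, so no additional case analysis on whether the optimal route under $\bold{z}^1$ is a direct shuttle or passes through hubs is required at this step.
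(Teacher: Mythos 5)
Your proof is correct: reducing to the case $\delta^r=1$, observing that a zero left-hand side forces $\bold{z}\leq\bold{z}^1$, and then invoking Proposition~\ref{arcSubtractionLBProp} together with choice consistency is exactly the intended argument. The paper omits an explicit proof of this proposition, but your reasoning mirrors the style of the proof it does give for the lifting-based cut (Theorem~\ref{arcAdditionLiftingThm}), so there is nothing to flag beyond noting that validity should be understood with respect to the original bilevel problem (where $\delta^r={\cal C}^r(\bold{x^r},\bold{y^r})$ is enforced), which your argument does in substance.
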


By leveraging the lifted network designs introduced in Section
\ref{liftingSection}, additional valid inequalities are proposed to
enhance the consistency cuts as follows.

\begin{prop}
For a given transit network design~$\bold{z}^1$, if the condition in Proposition \ref{arcAdditionLiftingThm} holds and the rider of trip $r$ adopts the ODMTS under~$\bold{z}^1$, then the consistency cut becomes: 
\begin{equation} \label{eq:StrongerConsistencyCutHubs}
\sum_{h \in \overrightarrow{W}_{or^r} (\bold{z}^1) \cup \overleftarrow{W}_{de^r} (\bold{z}^1), l \in H} z_{hl} + \sum_{(h,l):{\bold{z}^1}_{hl}=1} (1 - z_{hl}) + \delta^{r} \geq 1
\end{equation}
\end{prop}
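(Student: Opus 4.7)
The plan is to establish the inequality by contrapositive: I will show that any feasible design $\bold{z}$ for which both leading sums on the left-hand side evaluate to zero must force $\delta^r = 1$. If either sum is strictly positive it is a nonnegative integer, so the cut is trivially satisfied; the only nontrivial case is when both sums vanish, and in that case I need to prove that the rider also adopts under $\bold{z}$.

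First I would translate the two arithmetic vanishing conditions into the structural conditions of Theorem \ref{arcAdditionLiftingThm}. The condition $\sum_{(h,l): z^1_{hl}=1}(1-z_{hl})=0$ is equivalent to $z_{hl}=1$ whenever $z^1_{hl}=1$, i.e.\ $\bold{z} \geq \bold{z}^1$ with every opened arc of $\bold{z}^1$ preserved. The condition $\sum_{h \in \overrightarrow{W}_{or^r}(\bold{z}^1) \cup \overleftarrow{W}_{de^r}(\bold{z}^1), \, l \in H} z_{hl} = 0$ forces $\sum_{l \in H} z_{hl} = 0$ for every such hub $h$, meaning none of the hubs closer to $or^r$ (resp.\ $de^r$) than the active hubs of $\bold{z}^1$ becomes active under $\bold{z}$. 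These two statements together are exactly the defining constraints of the admissible set in Theorem \ref{arcAdditionLiftingThm}.

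Second, since the hypothesis of the proposition assumes that the condition in Theorem \ref{arcAdditionLiftingThm} holds (namely, that the optimal route for $r$ under $\bold{z}^1$ uses shuttles to and from the closest active hubs $m$ and $n$), I can invoke Theorem \ref{arcAdditionLiftingThm} directly to conclude that $t^2 \leq t^1$, where $t^2$ is the optimal trip inconvenience under $\bold{z}$. Because the rider adopts the ODMTS under $\bold{z}^1$, the choice model \eqref{eq:ChoiceFunctionTime} yields $t^1 \leq \alpha^r t^r_{cur}$, whence $t^2 \leq \alpha^r t^r_{cur}$ and the rider also adopts under $\bold{z}$, so $\delta^r = 1$ in any consistent assignment. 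The cut therefore holds with equality $0 + 0 + 1 \geq 1$ in this extremal case, and with slack in all other cases.

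The only delicate step is the first translation: one must verify that "$\sum_{l \in H} z_{hl} = 0$ for all $h$ in those sets" correctly captures inactivity in the sense of $\mathcal{H}(\bold{z})$, which follows from the definition of $\mathcal{H}$ together with the weak connectivity constraint \eqref{eq:upperLevelConstr1} that equalizes incoming and outgoing opened arcs. Once that identification is made, the remainder of the argument is a direct appeal to Theorem \ref{arcAdditionLiftingThm} and to the choice model, with no additional computation required.
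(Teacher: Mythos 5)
Your proposal is correct and follows essentially the same route as the paper: reduce to the case where both sums vanish, observe that this places the candidate design in the admissible set of Theorem \ref{arcAdditionLiftingThm}, and then chain $t^2 \leq t^1 \leq \alpha^r t^r_{cur}$ to force $\delta^r = 1$. Your explicit verification that the vanishing of the sums, together with the weak connectivity constraint \eqref{eq:upperLevelConstr1}, reproduces the set-membership conditions of the theorem is a detail the paper leaves implicit, but it is the same argument.
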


\begin{proof}{Proof:}
For any design $\bold{z}^2$ in the form described in Proposition \ref{arcAdditionLiftingThm}, $t^2 \leq t^1$. Therefore, if the rider of trip $r$ adopts the ODMTS under~$\bold{z}^1$, then $t^2 \leq t^1 \leq \alpha^r t^r_{cur}$. This result implies adoption of the ODMTS for trip $r$ by setting $\delta^{r}$ to 1, under any design $\bold{z}^2$. $\square$
\end{proof}

For a given transit network design~$\bold{z}^1$, if the arc(s) satisfying the
condition in Proposition \ref{arcRemovalLiftingThm} are removed from 
$\bold{z}^1$, then the rider choices remain the same.

\begin{prop}
If the rider of trip $r$ adopts the ODMTS under design~$\bold{z}^1$, then the following inequality is valid:
\begin{equation} \label{eq:StrongerConsistencyCutArcs}
\sum_{h \in \mathcal{A}^r(\bold{z}^1)} (1 - z_{hl}) + \sum_{(h,l):{\bold{z}^1}_{hl}=0} z_{hl} +  \delta^{r} \geq 1 
\end{equation}
On the other hand, if the rider of trip $r$ does not adopt the ODMTS under~$\bold{z}^1$, then the following inequality is valid:
\begin{equation} 
\sum_{h \in \mathcal{A}^r(\bold{z}^1)} (1 - z_{hl}) + \sum_{(h,l):{\bold{z}^1}_{hl}=0} z_{hl}  \geq \delta^{r}
\end{equation}
\end{prop}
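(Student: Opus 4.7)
The plan is to derive both inequalities as direct consequences of Theorem \ref{arcRemovalLiftingThm} via a two-case analysis driven by whether the coupling sums vanish. Reading $\mathcal{A}^r(\bold{z}^1)$ as the set of bus-leg arcs $(h,l)$ actually traversed on the optimal route of trip $r$ under $\bold{z}^1$, the key observation is that
\[
\sum_{(h,l) \in \mathcal{A}^r(\bold{z}^1)} (1 - z_{hl}) \;+\; \sum_{(h,l):z^1_{hl}=0} z_{hl} \;=\; 0
\]
exactly when (i) every leg used on the optimal route under $\bold{z}^1$ is still open in $\bold{z}$, and (ii) no leg that was closed in $\bold{z}^1$ has been opened in $\bold{z}$. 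In that situation, $\bold{z}$ is obtained from $\bold{z}^1$ by removing only arcs that are not on the optimal route for $r$, so Theorem \ref{arcRemovalLiftingThm} applies and the trip duration is preserved, $t^2 = t^1$.

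For the first inequality, suppose the rider adopts under $\bold{z}^1$, i.e., $t^1 \leq \alpha^r t^r_{cur}$. If the coupling sum is at least one, the inequality holds for any $\delta^r \in \{0,1\}$. Otherwise the sum is zero, so by the observation above $t^2 = t^1 \leq \alpha^r t^r_{cur}$, which forces ${\cal C}^r(\bold{x}^r,\bold{y}^r) = 1$, and constraint \eqref{eq:userChoiceModel} forces $\delta^r = 1$; the inequality then holds with equality. For the second inequality the argument is symmetric: when the rider does not adopt under $\bold{z}^1$ we have $t^1 > \alpha^r t^r_{cur}$; if the coupling sum is at least one, the inequality holds since $\delta^r \leq 1$; if it is zero, then $t^2 = t^1 > \alpha^r t^r_{cur}$, so ${\cal C}^r = 0$ and $\delta^r = 0$, giving $0 \geq 0$.

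The main point to watch is the interpretation of $\mathcal{A}^r(\bold{z}^1)$, which is not explicitly defined at the point of the statement but is the natural object referenced by Theorem \ref{arcRemovalLiftingThm} — namely the hub-to-hub arcs appearing on the lexicographically optimal route for trip $r$ under $\bold{z}^1$. Once that reading is fixed, the proof invokes only Theorem \ref{arcRemovalLiftingThm} together with the feasibility constraint \eqref{eq:userChoiceModel} tying $\delta^r$ to the choice model, so no further analysis of the follower problem is needed.
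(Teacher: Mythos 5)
Your proof is correct and takes essentially the approach the paper intends: the proposition is stated without an explicit proof, but the preceding sentence makes clear it is meant to follow from Theorem~\ref{arcRemovalLiftingThm} exactly as you argue --- when both coupling sums vanish, the new design differs from $\bold{z}^1$ only by closing arcs off the optimal route of trip $r$, so that route remains feasible and lexicographically optimal, its inconvenience $f^r$ is unchanged, and constraint~\eqref{eq:userChoiceModel} then pins $\delta^r$ to its value under $\bold{z}^1$. Your reading of $\mathcal{A}^r(\bold{z}^1)$ as the set of hub-to-hub arcs used on the optimal route under $\bold{z}^1$ (with the first sum indexed over pairs $(h,l)$) is the intended one.
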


\subsection{Pareto-Optimal Cuts}
\label{sec:ParetoOptCuts}

To further accelerate the solution methodology, the decomposition
algorithm generates Pareto-optimal cuts \citep{Magnanti1981}.  Each
subproblem is first solved to identify its optimal objective function
value, i.e., $\Upsilon^r(\bar{z})$ for trip $r$ and design
$\bar{\bold{z}}$. The second step solves the Pareto subproblem 
\begin{subequations} \label{eq:dualLowerLevelPareto}
\begin{alignat}{1}
\max \quad & (u_{or^r}^r - u_{de^r}^r) - \sum_{h,l \in H} {z}^0_{hl} v^r_{hl} \label{eq:dualLowerLevelObjPareto} \\
\text{s.t.} \quad & u_h^r - u_l^r - v_{hl}^r \leq \hat{\tau}_{hl}^r \quad \forall h,l \in H \label{eq:dualLowerLevelConstr1Pareto} \\
& u_i^r - u_j^r \leq \hat{\gamma}_{ij}^r \quad \forall i,j \in A^r \label{eq:dualLowerLevelConstr2Pareto} \\
& (u_{or^r}^r - u_{de^r}^r) - \sum_{h,l \in H} \bar{z}_{hl} v^r_{hl} = \Upsilon^r (\bar{z}) \label{eq:dualLowerLevelConstr4Pareto} \\
& u_i^r \geq 0 \quad \forall i \in N, v_{hl}^r \geq 0 \quad \forall h,l \in H, \label{eq:dualLowerLevelConstr3Pareto}
\end{alignat}
\end{subequations}
where constraint \eqref{eq:dualLowerLevelConstr4Pareto} is added and
{\color{black}the objective function \eqref{eq:dualLowerLevelObjPareto}
  uses a core point $\bold{z}^0$ that satisfies the weak connectivity
  constraint \eqref{eq:upperLevelConstr1}. This core point can be
  selected from the relative interior of the convex hull of feasible
  network designs to obtain cuts that are not dominated by other
  optimality cuts. However, points that do not satisfy these criteria
  can be also used in the objective function to obtain valid cuts. In
  this study, for a given $\eta \in (0,1)$, this point is set as
  $z^0_{hl} = \eta$ for all $h,l \in H$. This selected point can be
  further updated through iterations to enhance the computational
  performance of this approach \citep{PAPADAKOS2009176}.}

\section{Computational Study}
\label{sec:ComputationalStudy}

This section presents a case study using a real data set from AAATA,
the transit agency serving the broader Ann Arbor and Ypsilanti area of
Michigan. Section \ref{sec:CaseStudySetting} introduces the
experimental setting. Section \ref{sec:ODMTSDesign} presents the ODMTS
design under different configurations, and provides a detailed
analysis in comparison to the current transit system. Section
\ref{compEfficiencySection} discusses the computational performance of
the proposed solution approach.

\subsection{Experimental Setting}
\label{sec:CaseStudySetting}

The case study is based on the AAATA transit system that operates over
1,267 bus stops, \textcolor{black}{in which 10 of these stops are
  designated as hubs in the baseline {\color{black}ODMTS} setting since they are located at
  high density corridors.}  It uses all the trips \textcolor{black}{utilizing the current transit system} from 6 pm to 10 pm,
i.e., which consists primarily \textcolor{black}{of} commuting trips from work to
home. There are 1,503 trips, each associated with an origin and a
destination bus stop, for a total of 5,792 riders {\color{black} as each trip can have multiple riders}. As the time and
distance between bus stop pairs do not satisfy triangular inequality,
a preprocessing step is applied to ensure this property. 

The experimental settings define different rider preferences depending
on income levels. More specifically, as the income level of the riders
increases, they become less tolerant to increases in trip duration. To
this end, the experiments categorize the trips into three groups:
high-income, middle-income, and low-income trips. This categorization
in income levels is based on the destination stop of each trip, which
is used as a proxy for the residential address of riders of that trip.
Out of the 1,503 trips, there are 476 low-income, 819 middle-income,
and 208 high-income trips with 1,754, 3,316, and \textcolor{black}{722} riders
respectively. The experimental settings also assume that all
low-income riders must use the transit system, whereas a certain
percentage of riders from middle-income and high-income levels have
the option to switch to the ODMTS from their current mode of travel
with personal vehicles. In particular, 100\%, 75\%, and 50\% of the
trips from the low-income, middle-income and high-income categories
must utilize the transit system, while the remaining ones have a mode
decision to make. Consequently, the value of the parameter $\alpha^r$
in choice function \eqref{eq:ChoiceFunctionTime} becomes smaller as the income level
of the riders increases. In particular, $\alpha^r$ is set to 1.5 and 2
for the trips associated with high-income and middle-income riders
respectively.

The bus cost per mile, $\rho$, is set to \$5.44 and the on-demand shuttle
cost per mile, $g$, is set to \$1.61. The price $\phi$ of using the
ODMTS \$2.50, which is in line with the fares of transit agencies. The
experimental setting assumes $n = 16$ buses within the four-hour
planning horizon for each open leg between the hubs with an average
waiting time $s$ of 7.5 minutes. The cost and inconvenience parameter
$\theta$ is 0.001 in the case study. {\color{black}As part of preprocessing, the shortest path between each node pair $i,j$ is precomputed based on the arc weights that are equal to the weighted cost and inconvenience of that pair if it is served by an on-demand shuttle, i.e. with the arc weights $(1 - \theta) g \, d'_{ij} + \theta t'_{ij}$, where $d'_{ij}$ and $t'_{ij}$ correspond to the distance and time metrics in the original data set. 
Using the resulting shortest path, the time $t_{ij}$ and distance $d_{ij}$ values between nodes $i,j$ are computed for each pair. 
Furthermore, the value of the parameter $\eta$ in Section \ref{sec:ParetoOptCuts} is set to 0.01 after comparing its computational performance against different values.}
Computational experiments are
conducted using Gurobi 9.0 as the solver on an Intel i5-3470T 2.90 GHz
machine with 8 GB RAM.

\subsection{Study of ODMTS Designs}
\label{sec:ODMTSDesign}

This section studies the ODMTS designs under different
assumptions. Section \ref{sec:TransitNetworkDesign} presents the
baseline ODMTS design and analyses its trip duration and adoption rates. The following sections examine how the baseline
design changes under various assumptions.  Sections
\ref{sec:IncreasedCostShuttles}--\ref{sec:IncreasedRidershipSectionandHubs}
{\color{black}examine} configurations where (1) the cost of operating on-demand
shuttles becomes higher, {\color{black} (2) travel inconvenience is penalized more}, (3) ridership increases, (4) travel choices
are associated with riders who cannot afford personal vehicles \textcolor{black}{for examining access to transit systems}, and
(5) the number of hubs is increased and the ridership also
grows. Finally, Section \ref{ref:CostAnalysisSection} compares the
baseline with the {\color{black} five} configurations with \textcolor{black}{respect} to adoption rates,
costs, and revenues obtained.

\subsubsection{The Baseline ODMTS Design}
\label{sec:TransitNetworkDesign}

The baseline ODMTS design is depicted in Figure
\ref{networkDesignFigure} and it opens 7 hubs. \textcolor{black}{Hub candidates are shown as black triangles and bus stops are colored
by income level: red dots in low-income regions, gray squares in middle-income
regions, and green pluses in high-income regions. 94\% of
middle-income and 74\% of high-income riders adopt the ODMTS.} 

\begin{figure}[!t]
\includegraphics[width=\textwidth]{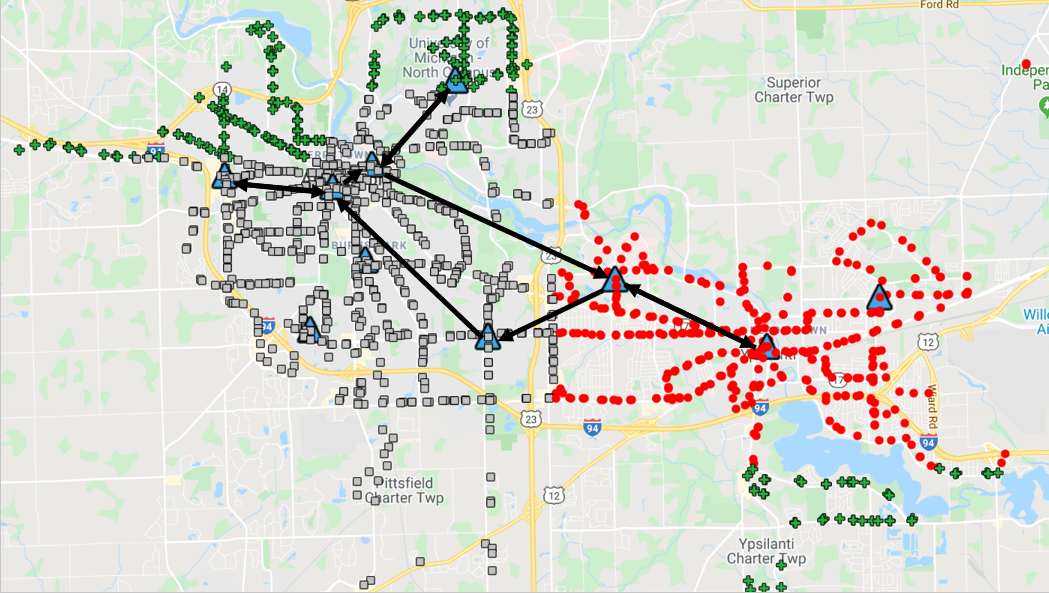}
\caption{Network Design for the ODMTS with 10 Hubs \textcolor{black}{(black triangles represent potential hub locations and black arrows show the open hub legs in the network design. In terms of bus stops; stops in low-income regions, middle-income regions, high-income regions correspond to red dots, gray squares, green pluses, respectively)}.}
\label{networkDesignFigure}
\end{figure}

\begin{table}[!t]
    \centering
   \begin{tabular}{rrrrrrrrrrrr}
   \hline
           & \multicolumn{3}{c}{{Riders adopting ODMTS}} & & \multicolumn{3}{c}{{Existing riders}} & & \multicolumn{3}{c}{{Riders not adopting ODMTS}} \\
           \cline{2-4} \cline{6-8} \cline{10-12}
{Income} & {ODMTS} & {direct} & {AAATA} & & {ODMTS} & {direct} & {AAATA} & & {ODMTS} & {direct} & {AAATA} \\
\hline
       low &             \multicolumn{ 3}{c}{NA} & &      16.05 &       6.90 &      25.63 &            & \multicolumn{ 3}{c}{NA} \\
    medium &       4.21 &       3.61 &      14.64 & &     11.27 &       5.03 &      21.53 &     & 25.91 &       7.73 &      31.88 \\
      high &       4.61 &       4.61 &      15.42 & &      9.84 &       5.31 &      21.06 &     & 19.96 &       8.37 &      29.77 \\
\hline
   \end{tabular}
\caption{Trip Duration Analysis under 10 Hubs Design.}
\label{TripDurationAnalysis}
\end{table}

\begin{table}[!t]
    \centering
   \begin{tabular}{rrrrrrrrrrrr}
   \hline
           & \multicolumn{3}{c}{{Riders adopting ODMTS}} & & \multicolumn{3}{c}{{Existing riders}} & & \multicolumn{3}{c}{{Riders not adopting ODMTS}} \\
           \cline{2-4} \cline{6-8} \cline{10-12}
{Income} & {ODMTS} & {direct} & {AAATA} & & {ODMTS} & {direct} & {AAATA} & & {ODMTS} & {direct} & {AAATA} \\
\hline
       low &             \multicolumn{ 3}{c}{NA} &  &    18.39 &       6.91 &      25.63 &            & \multicolumn{ 3}{c}{NA} \\
    medium &       3.21 &       2.82 &      12.19 & &     14.16 &       5.03 &      21.53 &     & 27.38 &       7.23 &      29.14 \\
      high &       4.47 &       4.47 &      14.42 & &     10.41 &       5.36 &      21.06 & &      21.09 &       8.37 &      29.99 \\
\hline
   \end{tabular}
    \caption{Trip Duration Analysis under 10 Hubs Design with Increased On-Demand Shuttle Cost.}
    \label{TripDurationAnalysis10HubsIncreasedShuttleCost}
\end{table}

Table~\ref{TripDurationAnalysis} reports various statistics on trip
durations per income level for existing riders, riders adopting the
designed ODMTS, and those not adopting it. More precisely, the table
uses the following classification: i) riders who
choose to adopt the ODMTS, ii) existing riders of the transit system
who have no mode choice and thus necessarily adopt the ODMTS, and iii)
riders with choice who do not adopt the designed ODMTS. {\color{black}For each rider
type and each income level, the table reports three  average trip durations over the corresponding rider sets:} the
duration in the designed ODMTS, the duration of the {\em direct} trip,
and the duration in the existing {\em AAATA} transit system.

The table highlights that the ODMTS routes are significantly shorter
than those of the existing transit system. For existing riders, the
trip durations reduced by 37\%, 48\%, and 53\% for low-income,
middle-income, and high-income riders. \textcolor{black}{This is
  critical since many of these riders may not have an alternative
  transportation mean, and the ODMTS should not increase the travel
  time for the vast majority of these riders.  In particular, out of
  1503 trips, 1347 trips utilize the ODMTS as either their riders
  prefer adopting the ODMTS or they are part of the existing
  trips. From the set of trips with choice who adopt the ODMTS, all
  trips have a travel time which is less than their corresponding
  travel time in the current transit system. On the other hand, a
  subset of the existing trips have longer trip
  durations. Specifically, out of 1347 trips, 11\% of trips (149
  trips) have longer travel time in the ODMTS with on average 7.99
  minutes longer trips. Note that this is a pessimistic estimate for
  the ODMTS as the transit times in the current system do not
  {\color{black}factor in} the time to walk from the true origin to the bus stop and
  from bus stop to the true destination, whereas the ODMTS picks up
  and drops off the riders (essentially) at their origin and to their
  destination. This result demonstrates that, for 89\% of the trips,
  ODMTS perform better compared to the current transit system with
  better convenience while being profitable at reasonable ticket
  prices as discussed in Section \ref{ref:CostAnalysisSection}.}

Furthermore, it is interesting to examine low-income riders whose
trips take longer than 40 minutes in the existing transit
system. These trips, called {\em low-income long transit (LILT)
  trips}, constitute 28\% of the low-income rides and have an average
transit time of 51.39 minutes. Under the baseline ODMTS design their
average trip duration decreased to 32.21 minutes, a 37\% reduction in
transit time.  For riders with mode choice, the durations of the
existing transit routes are also significantly reduced under the
baseline ODMTS design. Interestingly, riders who adopt the ODMTS have
routes almost as short as direct trips. The reduction in average trip
duration is 71\% and 70\% for middle-income and high-income riders who
adopt the ODMTS design, making the proposed ODMTS substantially more
attractive. The riders who do not adopt ODMTS have longer direct trip
times: although the baseline ODMTS improves over the existing system,
the reduction in transit time is not enough to induce a mode change.

\subsubsection{Impact of Increased Cost for On-Demand Shuttles}
\label{sec:IncreasedCostShuttles}

Consider the case where the cost of on-demand shuttles increases by
50\%.  Figure~\ref{networkDesignFigure10HubsIncreasedShuttleCost}
depicts the resulting ODMTS design which now opens all hubs and
significantly increases their connectivity. The resulting ODMTS thus
relies more on the bus network and less on the on-demand shuttles to
serve the trips. The overall adoption rates decreased slightly, as \textcolor{black}{92\% of the middle-income and 74\% of the high-income riders adopt the system. }
This reduction in adoption is obviously directly linked to
longer transit times. Table~\ref{TripDurationAnalysis10HubsIncreasedShuttleCost}
reports the average trip durations corresponding to each rider class
under this setting.

\begin{figure}[!t]
  \includegraphics[width=\textwidth]{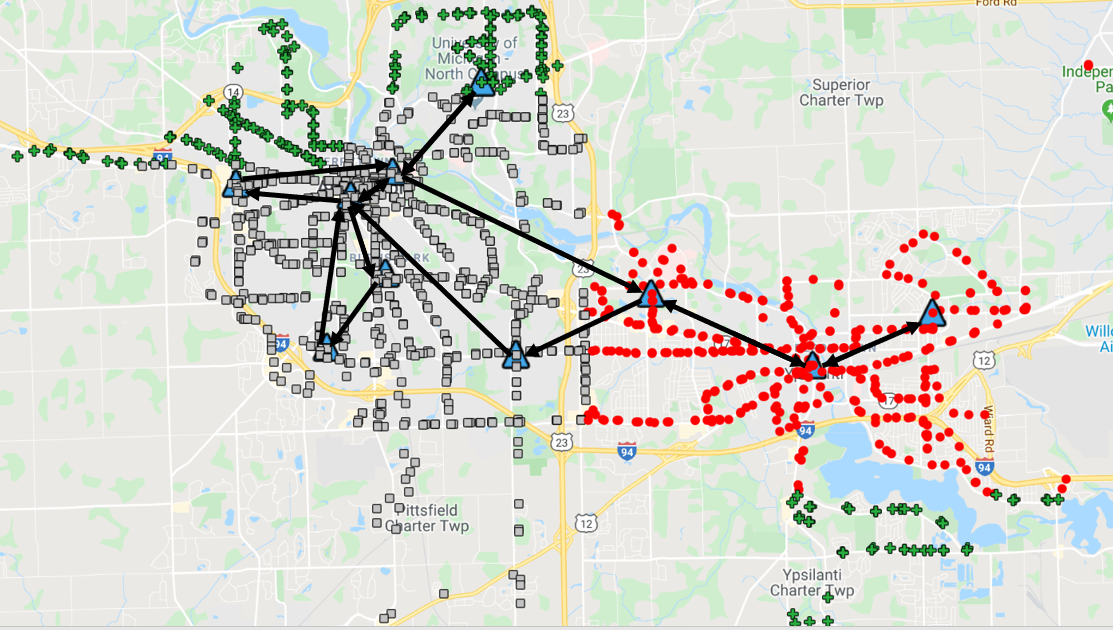}
\caption{Network Design for the ODMTS with 10 Hubs with Increased On-Demand Shuttle Cost.} \label{networkDesignFigure10HubsIncreasedShuttleCost}
\end{figure}

{\color{black}
\subsubsection{Impact of Weights of Cost and Inconvenience}
\label{sec:ImpactofThetaParameter}

This section studies the effect of the choice of the parameter $\theta$, which is used for adjusting the trade-off between cost and inconvenience in the weighted objective. It presents the results of the baseline instance in Section \ref{sec:TransitNetworkDesign} under a higher value of $\theta$ as 0.01, i.e., giving more weight to inconvenience and less weight to cost of the ODMTS. The resulting network design is shown in Figure~\ref{networkDesignFigure_IncreasedTheta}. Under this setting, in comparison to Figure~\ref{networkDesignFigure}, only three bus legs are open as the system aims at serving trips with shorter travel times, resulting in the usage of more on-demand shuttles. Table~\ref{TripDurationAnalysis_ThetaSmaller} summarizes the trip duration analysis under this setting, where 99\% of middle-income and 100\% of high-income riders adopt the ODMTS. As this ODMTS heavily depends on on-demand shuttles and do not benefit from the potential bus legs between hubs, it is not a desirable and sustainable system in comparison to the baseline setting with higher operational costs, as shown in Table
\ref{CostComparisonAnalysis}. As larger $\theta$ values give similar results, $\theta$ is selected as 0.001 throughout the computational study.}

\begin{figure}[!t]
\includegraphics[width=\textwidth]{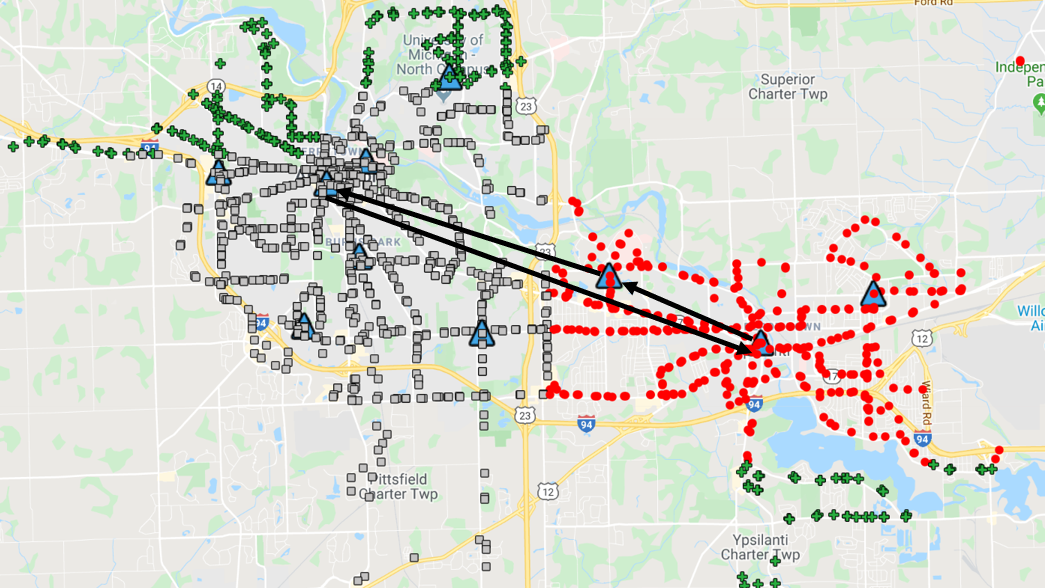}
\caption{\color{black} Network Design for the ODMTS with 10 Hubs with Increased $\theta$ parameter value.}
\label{networkDesignFigure_IncreasedTheta}
\end{figure}

\begin{table}[!t]
\color{black}
    \centering
   \begin{tabular}{rrrrrrrrrrrr}
   \hline
           & \multicolumn{3}{c}{{Riders adopting ODMTS}} & & \multicolumn{3}{c}{{Existing riders}} & & \multicolumn{3}{c}{{Riders not adopting ODMTS}} \\
           \cline{2-4} \cline{6-8} \cline{10-12}
{Income} & {ODMTS} & {direct} & {AAATA} & & {ODMTS} & {direct} & {AAATA} & & {ODMTS} & {direct} & {AAATA} \\
\hline
       low &             \multicolumn{ 3}{c}{NA} & &      8.47 &	6.70 &	25.63
 &            & \multicolumn{ 3}{c}{NA} \\
    medium &       5.75	& 5.16 &	21.65 &&	5.76	& 4.93 &	21.53
 &     & 31.99	& 15.03 &	70.52
 \\
      high &       6.98	& 6.79 &	24.69 &&	5.17	& 5.13 &	21.06
 &     & \multicolumn{ 3}{c}{NA} \\
\hline
   \end{tabular}
\caption{Trip Duration Analysis under 10 Hubs Design with $\theta = 0.01$.}
\label{TripDurationAnalysis_ThetaSmaller}
\end{table}

\subsubsection{Impact of Increased Ridership}
\label{sec:IncreasedRidershipSection}

This section examines the effect of increased ridership and studies
the ODMTS design when the number of riders doubles. The resulting
ODMTS design is illustrated in Figure
\ref{networkDesignFigure10HubsDoubledRidership}. Again, all of the
hubs are open and most of the bus legs from the baseline design also
operate in the new design. Furthermore, the design increases
connectivity to the lower-income communities by opening new bus legs
in the corresponding regions. On the other hand, adoption ratios in
terms of the trips decreased marginally: 
\textcolor{black}{92\% of middle-income and 74\% of high-income riders utilize the resulting system.} 

\begin{table}[!t]
    \centering
   \begin{tabular}{rrrrrrrrrrrr}
   \hline
           & \multicolumn{3}{c}{{Riders adopting ODMTS}} & & \multicolumn{3}{c}{{Existing riders}} & & \multicolumn{3}{c}{{Riders not adopting ODMTS}} \\
           \cline{2-4} \cline{6-8} \cline{10-12}
{Income} & {ODMTS} & {direct} & {AAATA} & & {ODMTS} & {direct} & {AAATA} & & {ODMTS} & {direct} & {AAATA} \\
\hline
       low &             \multicolumn{ 3}{c}{NA} & &     17.33 &       6.90 &      25.63 &  & \multicolumn{ 3}{c}{NA} \\
    medium &       3.71 &       3.17 &      13.69 &  &    12.06 &       5.03 &      21.53 &  & 24.71 &       7.30 &      29.31 \\
      high &       4.53 &       4.53 &      14.39 &   &   10.09 &       5.31 &      21.06 &  & 20.85 &       8.38 &      30.17 \\
\hline
   \end{tabular}
    \caption{Trip Duration Analysis under 10 Hubs Design with Doubled Ridership.}
    \label{TripDurationAnalysisDoubledRidership}
\end{table}

\begin{table}[!t]
    \centering
   \begin{tabular}{rrrrrrrrrrrr}
   \hline
           & \multicolumn{3}{c}{{Riders adopting ODMTS}} & & \multicolumn{3}{c}{{Existing riders}} & & \multicolumn{3}{c}{{Riders not adopting ODMTS}} \\
           \cline{2-4} \cline{6-8} \cline{10-12}
{Income} & {ODMTS} & {direct} & {AAATA} & & {ODMTS} & {direct} & {AAATA} & & {ODMTS} & {direct} & {AAATA} \\
\hline
       low &      32.40 &      11.99 &      51.50 & &      13.01 &       5.65 &      19.07 &    & 49.24 &      10.05 &      50.46 \\
    medium &       3.71 &       3.17 &      13.69 &  &    12.06 &       5.03 &      21.53 &     & 24.71 &       7.30 &      29.31 \\
      high &       4.53 &       4.53 &      14.39 &  &    10.09 &       5.31 &      21.06 &     & 20.85 &       8.38 &      30.17 \\
\hline
   \end{tabular}
    \caption{Trip Duration Analysis under 10 Hubs Design with Doubled Ridership and Rider Choices for LILT trips.}
    \label{TripDurationAnalysisDoubledRidershipLILTChoices}
\end{table}

\begin{figure}[!t]
  \includegraphics[width=\textwidth]{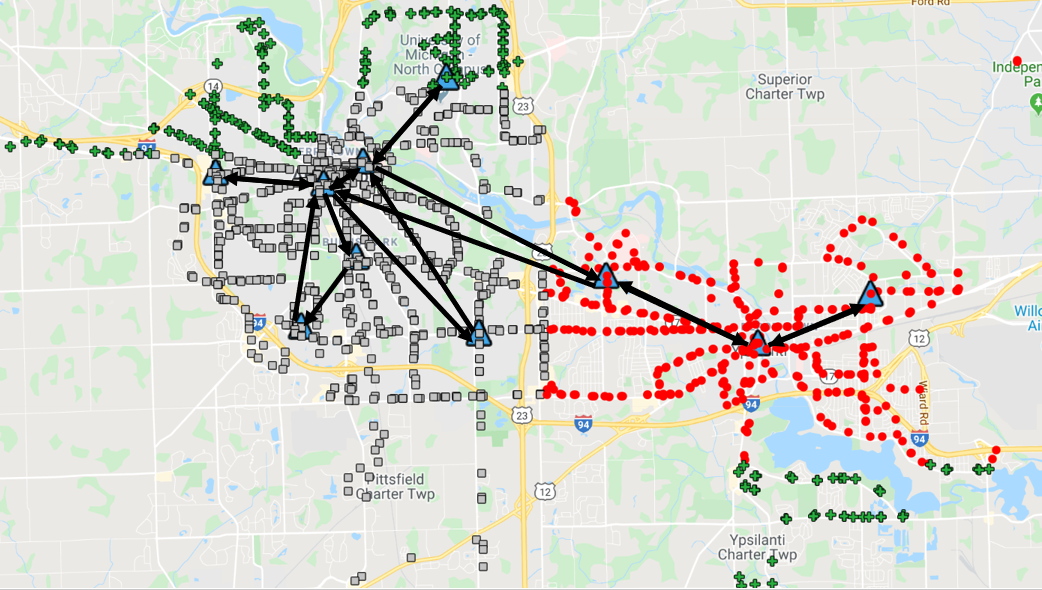}
  \caption{Network Design for the ODMTS with 10 Hubs with Doubled Ridership.} \label{networkDesignFigure10HubsDoubledRidership}
\end{figure}

Table \ref{TripDurationAnalysisDoubledRidership} presents the average
trip durations for this design. Similar to the base case, the ODMTS
performs better than the current transit system.  The trip durations
for existing riders become slightly longer in the new design as more
bus legs are utilized.

\subsubsection{Impact of {\color{black}Access Needs in ODMTS}}
\label{sec:IncreasedRidershipLowIncomeWithoutVehicles}

The next results concern \textcolor{black}{access to transit systems, a critical metric for transit agencies}. As mentioned earlier, it is critical to ensure that
low-income riders with no personal vehicles can be served by the transit
system within reasonable transit times. Otherwise, they may lose
\textcolor{black}{their access} to jobs, education, health-care, and other amenities,
since the trip duration may become impractical. Consider again the
LILT trips discussed in Section \ref{sec:TransitNetworkDesign}. To
study \textcolor{black}{these access needs to transit systems}, these trip riders are associated with a choice
model with $\alpha^r$ parameter set to 4. If a trip duration becomes
longer than four times than the direct trip time, these riders will
not \textcolor{black}{be} able to utilize the system anymore and lose \textcolor{black}{access} to major
opportunities. Out of 476 low-income trips, there are 132 such LILT
trips. The results are presented for the case of doubled ridership.

Under this model, 96\% of low-income trips utilize the ODMTS system
and almost all of the LILT riders adopt the ODMTS, demonstrating the
system ability to meet \textcolor{black}{access} needs. The ODMTS design is the
same as in Figure~\ref{networkDesignFigure10HubsDoubledRidership}.

Table~\ref{TripDurationAnalysisDoubledRidershipLILTChoices} presents
the trip duration results with this choice model and doubled
ridership. As the design remains the same, the middle-income and
high-income trips have the same adoption rates and trip durations as
in Table~\ref{TripDurationAnalysisDoubledRidership}.  LILT riders who
adopt the ODMTS have an average trip duration less than three times
that of the direct trip duration, and significantly shorter than the
average trip duration by the existing transit system. On the other
hand, LILT riders who do not adopt the ODMTS have much longer trip
durations, although they have shorter trips on average compared to the
current
system. Figure~\ref{networkDesignFigure10HubsDoubledRidership_LILTNotAdopting}
visualizes two of them, which are representative of trips for which
riders do not adopt the ODMTS. The trips share the same destination
(denoted by ``de'') but have different origins (denoted by ``or1'' and
``or2'').  Their routes are illustrated with orange dashed routes from
origins to destination. More specifically, the trip with origin
``or1'' uses an on-demand shuttle to reach the closest open hubs, but
results in a long trip due to many transfers between hubs. On the
other hand, the trip with origin ``or2'' utilizes the on-demand
shuttles for longer trip segments, but it involves a transfer to the
city center, increasing the trip duration. In general, however, all
the LILT trips with destination points in the vicinity of the
eastern-most hub adopt the ODMTS even when their origins are in the
city center.

\begin{figure}[!t]
  \includegraphics[width=\textwidth]{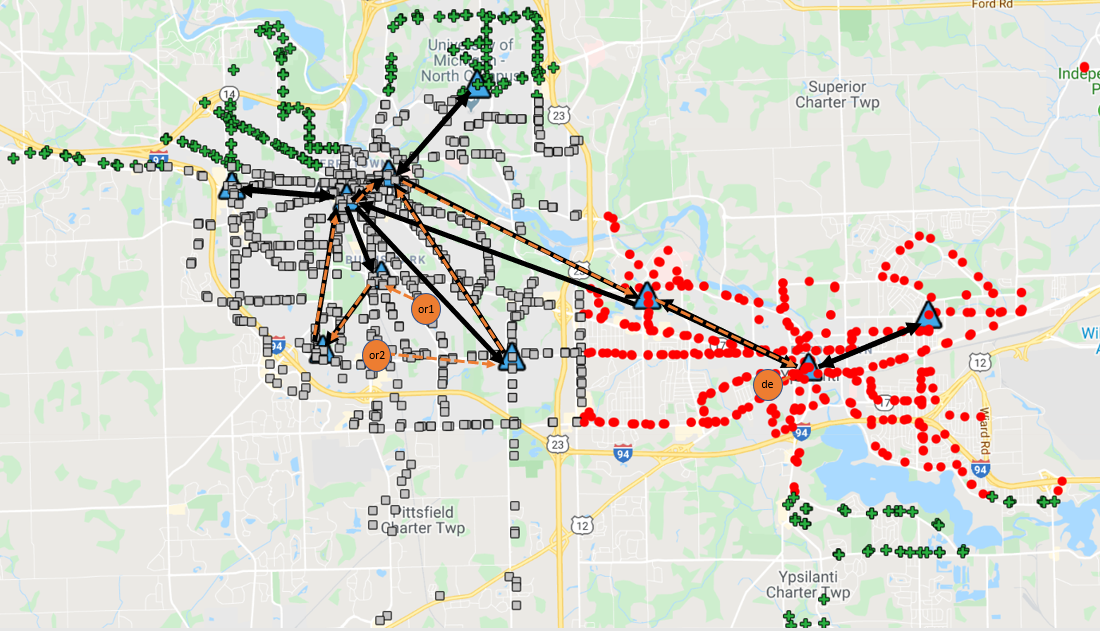}
  \caption{Visualization of Sample LILT Trips Not Adopting ODMTS.} \label{networkDesignFigure10HubsDoubledRidership_LILTNotAdopting}
\end{figure}

\subsubsection{Impact of Number of Hubs}
\label{sec:IncreasedRidershipSectionandHubs}

It is also interesting to study the effect of increasing the number of
hubs as ridership increases. Figure
\ref{networkDesignFigure10HubsDoubledRidership20Hubs} presents the
ODMTS design for 20 hubs and doubled ridership. The resulting design
opens 14 hubs and the bus network has a significantly broader
geographical coverage.  In this setting, \textcolor{black}{91\% of middle-income and 73\% of high-income riders adopt the ODMTS respectively.} 
Table
\ref{TripDurationAnalysisDoubledRidership20Hubs} reports the average
trip duration: the more expansive bus network induces increases of
11\%, 18\%, 1\% in average trip durations for low-income,
middle-income, and high-income riders respectively.
\textcolor{black}{Additionally, for the LILT trips, their average trip duration reduced from 51.39 minutes in the current transit system to 36.74 minutes in this setting, which is a 29\% decrease on trip duration despite of having on average 2.5 minutes longer trips than the analogous ODMTS design for 10 hubs.}

\begin{figure}[!t]
  \includegraphics[width=\textwidth]{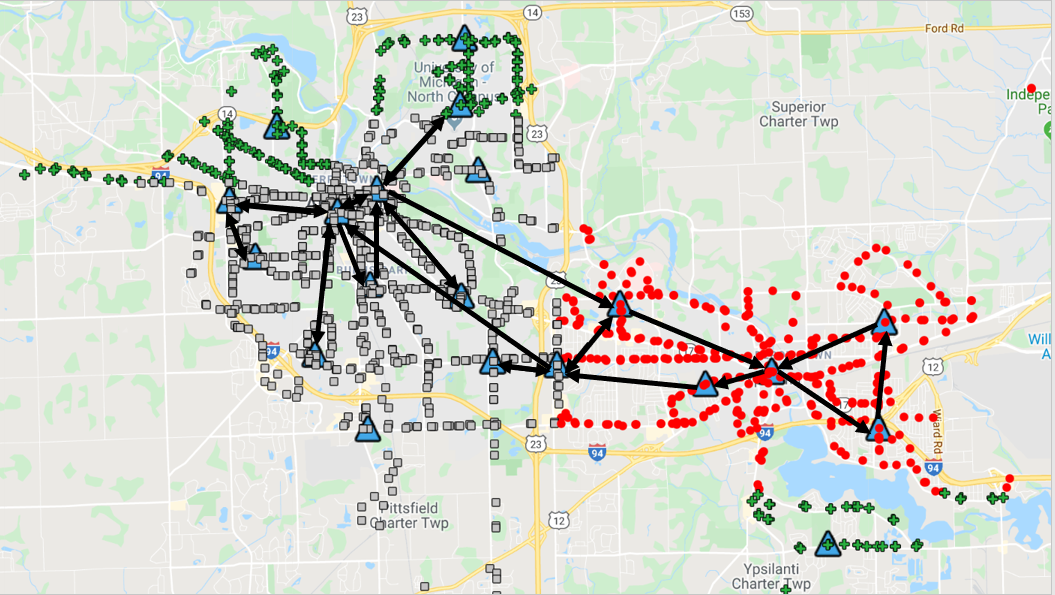}
  \caption{Network Design for the ODMTS with 20 Hubs with Doubled Ridership.} \label{networkDesignFigure10HubsDoubledRidership20Hubs}
\end{figure}

\begin{table}[!t]
    \centering
   \begin{tabular}{rrrrrrrrrrrr}
   \hline
           & \multicolumn{3}{c}{{Riders adopting ODMTS}} & & \multicolumn{3}{c}{{Existing riders}} & & \multicolumn{3}{c}{{Riders not adopting ODMTS}} \\
           \cline{2-4} \cline{6-8} \cline{10-12}
{Income} & {ODMTS} & {direct} & {AAATA} & & {ODMTS} & {direct} & {AAATA} & & {ODMTS} & {direct} & {AAATA} \\
\hline
       low &             \multicolumn{ 3}{c}{NA} & &      19.21 &       6.90 &      25.63 & &             \multicolumn{ 3}{c}{NA} \\
    medium &       3.05 &       2.64 &      11.22 & &     14.19 &       5.03 &      21.53 & &     24.21 &       7.12 &      28.94 \\
      high &       4.02 &       4.02 &      14.02 & &     10.17 &       5.31 &      21.06 & &      20.26 &       8.41 &      29.54 \\
\hline
   \end{tabular}
    \caption{Trip Duration Analysis under 20 Hubs Design with Doubled Ridership.}
    \label{TripDurationAnalysisDoubledRidership20Hubs}
\end{table}

\subsubsection{Adoption and Cost Analysis}
\label{ref:CostAnalysisSection}

{\color{black}Tables \ref{AdoptionComparisonAnalysis} and \ref{CostComparisonAnalysis} present a detailed comparison of
the ODMTS designs considered in Sections \ref{sec:TransitNetworkDesign}-\ref{sec:IncreasedRidershipSectionandHubs} with respect to the
adoption, cost, and revenue.} The revenue is assumed to be \$2.5 per
ride. {\tt 10Hub} refers to the baseline design from Section
\ref{sec:TransitNetworkDesign}, {\tt 10HubISC} to the 10 hub design
with increased on-demand shuttle costs from
Section~\ref{sec:IncreasedCostShuttles}, 
{\color{black} {\tt 10HubMWI} to the 10 hub design with more weight to minimizing inconvenience,} 
{\tt 10HubDR} to the 10 hub
design with doubled ridership from Section
\ref{sec:IncreasedRidershipSection}, {\tt 10HubDRAC} to the 10 hub
design with doubled ridership and considerations \textcolor{black}{of access} from
Section~\ref{sec:IncreasedRidershipLowIncomeWithoutVehicles}, and {\tt
  20HubDR} to the 20 hub design with doubled ridership from Section
\ref{sec:IncreasedRidershipSectionandHubs}.  \textcolor{black}{In Table \ref{AdoptionComparisonAnalysis},} columns ``MI'' and ``HI'' under ``Adoption (\%)'' column represent the percentage of the middle and high income riders who
adopt the ODMTS. No low-income riders have a choice model, except in
{\tt 10HubDRAC} \textcolor{black}{where 3428 of 3508 low-income riders adopt the ODMTS. Column ``\# of Riders'' corresponds to the number of
riders considered in the design, with the number of riders utilizing
the ODMTS in parentheses for middle-income, high-income and total riders, respectively}.  \textcolor{black}{In Table \ref{CostComparisonAnalysis},} columns ``Revenue'', ``Inv Cost'', and
``Trv Cost'' represent the revenue of the transit agency (from existing users and those
choosing to adopt the ODMTS), the investment cost of operating bus
legs between hubs, and the total travel cost of the ODMTS riders.
Column ``Net Cost/Rider'' presents the cost (or benefit) per rider: it is
obtained by deducting the revenue from the sum of the investment and
travel costs and dividing by the number of ODMTS riders.

\begin{table}[!t]
\centering
{\color{black}
\begin{tabular}{rc@{}c@{}cccc}
\hline
& \multicolumn{2}{c}{Adoption (\%)} && \multicolumn{3}{c}{\# of Riders} \\
\cline{2-3} \cline{5-7} 
           & MI & HI  && MI & HI & Total \\
\hline
     {\tt 10Hub} &   94 & 74  &&  3316 (3112)
 & 722 (536) & 5792 (5402) \\
     {\tt 10HubISC} & 92 & 74 && 3316 (3040)
 & 722 (532) & 5792 (5326) \\
 {\color{black}{\tt 10HubMWI}} & {\color{black}99} & {\color{black}100} && {\color{black}3316 (3312)}
 & {\color{black}722 (722)} & {\color{black}5792 (5788)} \\
   {\tt 10HubDR} &    92 & 74 &&  6632 (6124)
 & 1444	(1068) & 11584 (10700) \\
   {\tt 10HubDRAC} & 92 & 74 && 6632 (6124)
 & 1444	(1068) & 11584 (10620) \\
   {\tt 20HubDR} &   91 & 73 &&   6632 (6052)
 & 1444	(1048) & 11584 (10608) \\
\hline
\end{tabular}}
\caption{Adoption Comparison under Different ODMTS Settings.}
\label{AdoptionComparisonAnalysis}
\end{table}

\begin{table}[!t]
\centering
{\color{black}
\begin{tabular}{rcrrrr}
\hline
& \multicolumn{4}{c}{Revenue \& Costs} \\
\cline{2-5}
           &    Revenue &   Inv Cost &   Trv Cost & Net Cost/Rider \\
\hline
     {\tt 10Hub} &      13505.00 &    2440.80 &   13553.31 &       0.46 \\
     {\tt 10HubISC} & 13315.00 & 3564.59 & 17516.07 & 1.46 \\
      {\color{black}{\tt 10HubMWI}} & {\color{black}14470.00} & {\color{black}1429.86} & {\color{black}22153.77}
 & {\color{black}1.57} \\
   {\tt 10HubDR} &      26750.00 &    4073.14 &   23847.84 &       0.11 \\
   {\tt 10HubDRAC} & 26550.00 & 4073.14 & 23642.55 & 0.11 \\
   {\tt 20HubDR} &      26520.00 &    4959.34 &   20285.19 &      -0.12 \\
\hline
\end{tabular}}
\caption{Cost and Revenue Comparison under Different ODMTS Settings.}
\label{CostComparisonAnalysis}
\end{table}

The first interesting result is that the baseline design would be
profitable for a price of \$2.96, which is quite remarkable, given the
improvements in quality of service and the increased ridership. Of
course, the analysis ignores a variety of fixed costs and subsidies
but the analysis reflects the significant ODMTS potential.  As
ridership grows, revenues also grow in proportion and the adoption
rates remain similar. The investment cost for the bus network and the
travel costs of the on-demand shuttles also grow but slower: this
means that the net cost per rider decrease significantly, highlighting
economies of scale in ODMTS. The 20-hub design is particularly
interesting: the investment cost for the buses further increases but
the cost for on-demand shuttles decreases more, making the ODMTS
profitable at \$2.5.

Capturing travel mode adoption in the design of ODMTS ensures that the
transit system will be sized properly and have the targeted level of
performance. However, it is also interesting to mention the financial
benefits of modeling mode adoption. By scaling the obtained results
for 52 weeks, 5 days a week, and 12 hours a day, the bilevel
optimization model would produce savings of \$165,937, \$302,350, and
\$120,631 for {\tt 10HubDR}, {\tt 20HubDR}, and {\tt 10HubISC}
respectively. 

\subsection{Computational Efficiency}
\label{compEfficiencySection}

\begin{table}[!t]
\centering
\begin{tabular}{rrrrrrr}
\hline
     &      & \multicolumn{2}{c}{10 hubs} & & \multicolumn{2}{c}{20 hubs} \\
\cline{3-4} \cline{6-7}
{Income} & {\# of trips} & {direct trips} &  {\% Identified} & & {direct trips} &  {\% Identified} \\
\hline
 {low} &        476 &        145 &      30.46 & &       106 &      22.27 \\
{medium} &        819 &        260 &      31.75 & &       220 &      26.86 \\
{high} &        208 &         80 &      38.46 &  &       52 &      25.00 \\
\hline
{Total} &       1503 &        485 &      32.27 &  &      378 &      25.15 \\
\hline
\end{tabular}
\caption{Direct Trip Identification Analysis.}
\label{directTripIdentificationTable}
\end{table}

\begin{figure}[!t]
\centering
\begin{subfigure}[b]{.45\textwidth}
  \centering
  \caption{10 Hubs Instance.}
\label{compEfficiencyFigure_10Hubs}
\begin{tikzpicture}[scale=0.85]
\begin{axis}[
	xlabel={Run time ($10^3$ seconds)},
	ylabel={Optimality gap (\%)},
	xmin = 0, xmax = 3.6,
	xtick = {0, 1, 2, 3},
	ymin = 0, ymax = 100,
	ytick = {0,10,20,30,40,50,60,70,80,90,100},
     legend pos=north east,
]
\addplot table {DefaultRunRescaled.dat};

\addplot table {EnhancedRunRescaled.dat};

\legend{Base case, Enhanced case}
\end{axis}
\end{tikzpicture}
\end{subfigure}
\begin{subfigure}[b]{.45\textwidth}
  \centering
  \caption{20 Hubs Instance.}
 \label{compEfficiencyFigure_20Hubs}
 \begin{tikzpicture}[scale=0.85]
\begin{axis}[
	xlabel={Run time ($10^4$ seconds)},
	ylabel={Optimality gap (\%)},
	xmin = 0, xmax = 4.3,
	xtick = {0, 1, 2, 3},
	ymin = 0, ymax = 100,
	ytick = {0,10,20,30,40,50,60,70,80,90,100},
     legend pos=north east,
]
\addplot table {DefaultRun_20HubsRescaled.dat};

\addplot table {EnhancedRun_20HubsRescaled.dat};

\legend{Base case, Enhanced case}
\end{axis}
\end{tikzpicture}
\end{subfigure}
\caption{Impact of the Enhancements on Computational Performance.}
\label{compEfficiencyFigure}
\end{figure}
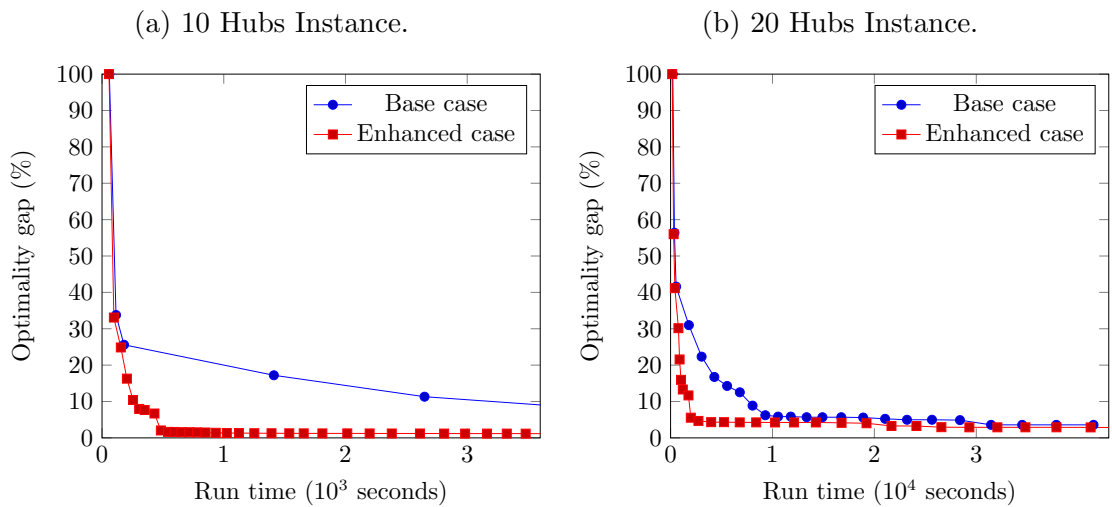

This section reports a number of computational results on the bilevel
optimization model, including the impact of the preprocessing steps
and the valid inequalities. Table~\ref{directTripIdentificationTable}
reports on the ability to detect direct trips for instances with 10
and 20 hubs.  32\% and 25\% of the trips are identified as direct in
the 10 hubs and 20 hubs instances. The percentage decreases for 20
hubs since the bus network is more expansive. In 10 hubs setting, the
highest percentage of direct trips are high-income, as the hub
locations are further away from the origin and destination of these
trips. This percentage reduces substantially for 20 hubs for
high-income class, especially in comparison to other rider classes,
demonstrating the importance of hub locations and the number of hubs
for this analysis.

Figures \ref{compEfficiencyFigure_10Hubs} and
\ref{compEfficiencyFigure_20Hubs} examine the benefits of the bounds on the follower problem presented in
Section~\ref{sec:UBonFollowerProblemObj} in combination with the valid
inequalities proposed in Section~\ref{sec:ValidIneq} \textcolor{black}{in comparison to a standard Benders decomposition algorithm with the nogood cuts for ensuring consistency between rider choices and network designs by excluding these enhancements}. The figures use
the baseline instance with 10 Hubs studied in
Section~\ref{sec:TransitNetworkDesign} and the 20 Hubs instance
studied in Section~\ref{sec:IncreasedRidershipSectionandHubs}. {\color{black} They
report the optimality gap and the run time at each iteration of the
algorithm over a time limit of 1 hour and 10 hours for the 10 Hubs and 20 Hubs instances, which terminate with optimality gaps of \%8.40 and \%3.58 for the base case and \%1.15 and \%2.84 for the the enhanced case, respectively. Furthermore, under the same time limit, the base case and enhanced case are able to conduct 6 and 33 iterations for the 10 Hubs instance, and 22 and 26 iterations for the 20 Hubs instance. Note that 10 Hubs instance can be solved to optimality within 2 hours.    
The results demonstrate the significant computational impact of the bounds and valid
inequalities: the proposed decomposition algorithm is capable of producing high-quality solutions in a reasonable amount of time for this real case study and
brings improvements of several orders of magnitude compared to a decomposition algorithm that does not preprocess trips with respect to the bounds and only relies on Benders and classical nogood cuts.}

As doubling the ridership in the case study considers the same origin-destination pairs with increased ridership amounts, the computational performance is not impacted by this change. {\color{black}On the other hand, increasing the number of distinct origin-destination pairs will typically impact the run time required for convergence of the solution algorithm.
Table \ref{RunTimeComparisonDifferentTripSizes} highlights these results over instances with different trip sizes, which are randomly selected from the set of trips. It compares the runtimes of the algorithm when riders are all adopting the transit (no latent trips) and when some riders may adopt the system (latent trips) depending on the mode choice model, as  discussed in Section~\ref{sec:CaseStudySetting}. The results show how 
much more challenging the problem becomes when latent demand is taken into account. However, the algorithms presented in this paper, are still capable of addressing this planning problem and obtain small optimality gaps. This is significant, since latent demand is a key worry of transit agencies as mentioned in the introduction. Obviously, improved computational methods are an important topic for future research.}

\begin{table}[!t]
\centering
\begin{tabular}{ccrrrrr}
\hline
     &      & \multicolumn{2}{c}{10 hubs} & & \multicolumn{2}{c}{20 hubs} \\
\cline{3-4} \cline{6-7}
{\# of trips} & {latent trips} & {\# of iter.} &  {run time (s)} &&  {\# of iter.} &  {run time (s)} \\
\hline
\multirow{2}{*}{100} &    No     &   2	& 4.96 &&	2	& 9.91 \\
 &   Yes      &      2	& 6.76	&& 2	& 11.73  \\
  \multirow{2}{*}{200} &     No    &        2 &	8.90 &&	2&	19.13
       \\
     &     Yes    &        3&	14.12&&	3&	24.66
   \\
      \multirow{2}{*}{500} &     No    &       5&	50.58 &&	5	&94.96
     \\
         &    Yes    &      88&	1316.76	&&94	&18000.00*
   \\
\hline
\end{tabular}
\caption{Run time comparison over instances with different trip sizes (*This instance reached the time limit with an optimality gap of \%2.0).}
\label{RunTimeComparisonDifferentTripSizes}
\end{table}

\section{Conclusion}
\label{sec:Conclusion}

This paper studied how to integrate rider mode preferences into the
design of ODMTS. This functionality was motivated by the desire to
capture the impact of latent demand, a key worry of transit
agencies. The paper proposed a bilevel optimization model to address
this challenge, in which the leader problem determines the ODMTS
design, and the follower problems identify the most cost efficient and
convenient route for riders under the chosen design. The leader model
contains a choice model for every potential rider that determines
whether the rider adopts the ODMTS given her proposed route. 

To solve the bilevel optimization model, the paper proposed a
decomposition method that includes Benders optimal cuts and nogood
cuts to ensure the consistency of the rider choices in the leader and
follower problems. Moreover, to improve the computational efficiency
of the method, the paper proposed upper and lower bounds on trip
durations for the follower problems and valid inequalities that
strenghten the nogood cuts using the problem structure.

The paper also presented an extensive computational study on a real
data set from AAATA, the transit agency for the broader Ann Arbor and
Ypsilanti region in Michigan. The study considered the impact of a
number of factors, including the price of on-demand shuttles, the
number of hubs, and \textcolor{black}{access to transit systems} criteria. It analyzed the adoption
rate of the ODTMS for various class of riders (low-income,
middle-income, and high-income). The designed ODMTS feature high
adoption rates and significantly shorter trip durations compared to
the existing transit system both for existing riders and riders who
adopted the ODMTS. Under increased ridership and/or the availability
of more hubs, trip durations may increase as they use more bus legs
between hubs and less on-demand shuttles; however, adoption rates are
not impacted much and the net profit of the transit agency increases
significantly through economies of scale. The results further
highlighted the benefits in \textcolor{black}{ensuring access} for low-income riders as
their \textcolor{black}{trip} durations decrease and remain reasonable.  Finally, the
computational study demonstrated the efficiency of the decomposition
method for the case study and the benefits of computational
enhancements.

Future work will consider more complex choice models (e.g., involving
the increasing cost of transfers or \textcolor{black}{probabilistic choice functions \citep{Paneque2021_ChoiceModels}}) and/or restrictions on acceptable
routes. Scaling the approach to large metropolitan areas is also a
priority.

\ACKNOWLEDGMENT{Many thanks to Julia Roberts at AAATA for sharing the
  transit data and for many interesting discussions. This research is
  partly supported by NSF Leap HI proposal NSF-1854684, and Department
  of Energy Research Award 7F-30154.}

\begin{APPENDIX}{}

{\color{black}
\section{Comparison with the Single-level Formulation}
\label{sec:ComparisontoSingleLevel}

This section presents a single-level formulation for the bilevel problem in Figure~\ref{fig:bilevel} to demonstrate the need to adopt a bilevel approach. Figure \ref{fig:singleLevel} presents the single-level problem. which moves the constraints of the lower level problem to the upper level. For simplicity, the lexicographic objective in the follower problem is omitted. 
\begin{figure}[h]
\begin{subequations} \label{eq:SingleLevelProblem}
\begin{alignat}{1}
\min_{z_{hl}, x^r_{hl}, y^r_{hl}, b^r, f^r, \delta^r} \quad & \sum_{h,l \in H} \beta_{hl} z_{hl} + \sum_{r \in T \setminus T'} p^r b^r + \sum_{r \in T'} p^r \delta^r (b^r - \varphi) \\
\text{s.t.} \quad & \sum_{l \in H} z_{hl} = \sum_{l \in H} z_{lh} \quad \forall h \in H  \\
   & b^r = \sum_{h,l \in H} \tau_{hl}^r x_{hl}^r + \sum_{i,j \in N} \gamma_{ij}^r y_{ij}^r \quad \forall r \in T \\
   & f^r = \sum_{h,l \in H}  (t_{hl} + s) x_{hl}^r + \sum_{i,j \in N}  t_{ij} y_{ij}^r  \quad \forall r \in T \\
   & \sum_{\substack{h \in H \\ \text{if } i \in H}} (x_{ih}^r - x_{hi}^r) + \sum_{j \in N}  (y_{ij}^r - y_{ji}^r) = \begin{cases}
     1 &, \text{if  } i = or^r \\
    -1 &, \text{if  } i = de^r \\
    0 &, \text{otherwise}
    \end{cases} \quad \forall i \in N, \> \forall r \in T \\
  & x_{hl}^r \leq z_{hl} \quad \forall h,l \in H, \quad \forall r \in T  \\
  & \delta^r = {\cal C}^r(\bold{x^r}, \bold{y^r}) \quad \forall r \in T' \label{eq:ChoiceFnc_SingleLevel} \\
& z_{hl} \in \{0,1\} \quad \forall h,l \in H, \quad \delta^r \in \{0,1\} \quad \forall r \in T'  \\
  & x_{hl}^r \in \{0,1\} \quad \forall h,l \in H, \quad y_{ij}^r \in \{0,1\} \quad \forall i,j \in N. 
\end{alignat}
\end{subequations}
\caption{\textcolor{black}{The Single-level Optimization Model for ODMTS Design with Travel Mode Adoption.}}
\label{fig:singleLevel}
\end{figure}

The choice function of every trip $r$ depends on the trip durations $f^r$ as defined in \eqref{eq:ChoiceFunctionTime}. To represent this relationship, constraint \eqref{eq:ChoiceFnc_SingleLevel} can be linearized as follows: 
\begin{align*}
f^r & \geq \alpha^r t^r_{cur} + \epsilon_f - M_f \delta^r, \\
f^r & \leq \alpha^r t^r_{cur} + M_f (1 - \delta^r),
\end{align*}
where $\epsilon_f \approx 0$ and $M_f$ is an upper bound on all of the trip durations under any network design. 

This formulation only evaluates the suggested routes and choices of the riders from the perspective of the transit agency, who consequently can suggest longer routes to the riders with choice if serving them is not profitable. Thus, their inconvenience is explicitly omitted in the system, which is undesirable for ensuring the access to the transit system. 

To illustrate this potential behavior, this section presents a numerical study over the provided baseline setting in  Section~\ref{sec:TransitNetworkDesign}. The instance is built by randomly selecting 100 trips from the data set. For giving more riders the choice of adoption in this setting, all trips from low-income riders are considered as existing riders, whereas all trips from middle income and high income riders constitute the latent demand. Table \ref{SingleLevelModelComparison} summarizes the comparison of the solutions of the bilevel problem in Figure~\ref{fig:bilevel} and the single-level problem in Figure~\ref{fig:singleLevel} in terms of rider adoption. Since the single-level problem is a relaxation to the bilevel problem, it results in a smaller objective function value. However, the single-level problem has a much lower adoption for all riders with choice and it explicitly suggests  longer routes to certain riders because serving them is not of direct benefit to the transit agency in terms of the objective function. This artificial removal of riders from the transit system also results in a different design with fewer opened bus legs. 

These results highlight the need for the bi-level model in Figure~\ref{fig:bilevel} in order to eliminate this pathological and unfair behavior. This is aligned with the objectives of many transit agencies which aims at using ODMTS to improve mobility for underserved communities.



\begin{table}[h]
    \centering
   \begin{tabular}{rrrr}
   \hline
    & \# of trips (\# of existing trips) &  \multicolumn{2}{c}{\# of trips adopting ODMTS} \\
    \cline{2-4}
           &&  Single-level & Bilevel \\
\hline
       low &   34 (34) & 34 & 34 \\
    medium &  58 (0) & 17 & 41   \\
      high &  8 (0) & 2 & 6   \\
\hline
   \end{tabular}
\caption{Single-level and Bilevel models comparison over a sample instance.}
\label{SingleLevelModelComparison}
\end{table}
}

{\color{black}
\section{Comparison with the Former Studies}
\label{sec:FinalComparisonCPAIOR}

This section expands the discussion presented in Section \ref{sec:DiscussiontoCPAIORPaper} to compare this study with the former study \citep{Basciftci2020} in terms of the novel analytical results derived in Section \ref{SectionAnalyticalResultsonTripDurations}, solution algorithm presented in Section \ref{sec:solutionMethodologySection} and case studies in Section \ref{sec:ComputationalStudy}. As the former paper studies an aligned choice model with the objective of the follower problem $b^r$, it benefits from the following result: Since the follower problem obtains the shortest path from origin to destination of a given trip with respect to the weighted cost and convenience of the arcs, $b^r$ value decreases as more hub legs become available. Then, the paper benefits from anti-monotone choice functions that are defined as follows. 
\begin{definition}[Anti-Monotone Mode Choice] A choice function ${\cal C}^r$ is anti-monotone if
$
  b^r_1 \leq b^r_2 \Rightarrow {\cal C}^r(b^r_1) \geq {\cal C}^r (b^r_2).
$
\end{definition}
Observe that the choice function ${\cal C}^r(b^r) \equiv \mathbbm{1} (b^r \leq \alpha^r \ b^r_{cur})$ is anti-monotone since $b_1^r \leq b_2^r$ implies ${\cal C}^r(b^r_1) \geq {\cal C}^r(b^r_2)$. Thus, to obtain the case $b_1^r \leq b_2^r$, we can simply consider evaluating $b^r$ under the designs $\bold{z}^1 \geq \bold{z}^2$, where $b_i^r$ represents $b^r$ value under design $\bold{z}^i$. Under these relationships, nogood cuts \eqref{eq:case2Cut} and \eqref{eq:case3Cut} to ensure consistency between rider choices and design variables can be strengthened directly to the ones in
\eqref{eq:case2CutStronger} and \eqref{eq:case2CutStrongerRem} by adding or removing arcs from a given design, respectively, without deriving any further conditions. Having aligned objectives between the follower problem and the choice function along with the stronger cuts result in the fast convergence of the Benders decomposition based solution algorithm. 

On the other hand, although the choice function studied in this paper
is anti-monotone in terms of $f^r$, there is no direct relationship
between the network design variable $z$ and the convenience $f^r$ as
opening or closing of hub legs does not necessarily improve or
deteriorate the convenience of the trips. Thus, the former results do
not apply and these not aligned objectives complicate the solution
procedure. To be able to strengthen the consistency cuts from nogood
cuts in this setting, further analytical results are derived in
Section \ref{SectionAnalyticalResultsonTripDurations}. This analysis
provides sufficient conditions to obtain the stronger cuts
\eqref{eq:case2CutStronger} and
\eqref{eq:case2CutStrongerRem}. Furthermore, to accelerate the
solution algorithm, the problem size is reduced by identifying the
direct trips derived through these analyses, as demonstrated in Table
\ref{directTripIdentificationTable} over the studied
instances. Moreover, stronger cuts in the form of
\eqref{eq:StrongerConsistencyCutHubs} and
\eqref{eq:StrongerConsistencyCutArcs} are derived by identifying
certain hub legs whose addition or removal from a given design will
not impact the convenience and consequently the adoption behavior of
the riders.  Furthermore, upper and lower bounds on the follower
problem are presented in Section~\ref{sec:UBonFollowerProblemObj} to
strengthen the presented formulation.  The experiments demonstrate the
significant computational benefits of adopting the proposed
enhancements under this complicating setting with not aligned
objectives.

In addition to the differences in the modelling perspectives discussed in Section \ref{sec:DiscussiontoCPAIORPaper} and these novel technical results tailored for this problem setting, this paper provides an extensive case study over the broader Ann Arbor and Ypsilanti area of
Michigan over various instance settings. For each instance, the average trip time of each rider class depending on their adoption behaviour and income level are presented in comparison to the current transit system and direct travel option. The case study further presents results under different numbers of hubs, initial ridership amounts, on-demand shuttle costs, and with additional \textcolor{black}{concerns on access to transit systems}. These results demonstrate the performance of the ODMTS with high adoption percentages and better convenience along with profitability with reasonable ticket prices as ODMTS is designed under fixed pricing for existing riders and convenience concerned potential riders. 
}

\end{APPENDIX}

\bibliographystyle{informs2014} 
\bibliography{References} 

\end{document}